\newtheorem{theorem}{Theorem}[section]
\newtheorem{corollary}[theorem]{Corollary}
\newtheorem{proposition}[theorem]{Proposition}
\theoremstyle{definition}
\newtheorem{question}[theorem]{Question}
\newtheorem{problem}[theorem]{Problem}
\numberwithin{equation}{subsection}
\newcommand{\Aut}{\operatorname{Aut}}
\newcommand{\id}{\mathrm{id}}
\begin{document}
\title{On the lower central series of some virtual knot groups}
\author{Valeriy G. Bardakov}
\author{Neha Nanda}
\author{Mikhail V. Neshchadim}

\date{\today}
\address{Sobolev Institute of Mathematics and Novosibirsk State University, Novosibirsk 630090, Russia.}
\address{Novosibirsk State Agrarian University, Dobrolyubova street, 160, Novosibirsk, 630039, Russia.}
\address{Regional Scientific and Educational Mathematical Center of Tomsk State University, 36 Lenin Ave., Tomsk, Russia.}
\email{bardakov@math.nsc.ru}

\address{Department of Mathematical Sciences, Indian Institute of Science Education and Research (IISER) Mohali, Sector 81,  S. A. S. Nagar, P. O. Manauli, Punjab 140306, India.}
\email{nehananda@iisermohali.ac.in}

\address{Sobolev Institute of Mathematics and Novosibirsk State University, Novosibirsk 630090, Russia.}
\address{Regional Scientific and Educational Mathematical Center of Tomsk State University, 36 Lenin Ave., Tomsk, Russia.}
\email{neshch@math.nsc.ru}

\subjclass[2010]{Primary 57M27; Secondary 20F36, 57M25}
\keywords{Knot group, representation, virtual braid group, virtual knot, residually nilpotent group}

\begin{abstract}
We study groups of some virtual knots with small number of crossings and prove that there is a virtual knot with long lower central series which, in particular, implies that there is a virtual knot with residually nilpotent group. This gives a possibility to construct invariants of virtual knots using quotients by terms of the lower central series of knot groups. Also, we study decomposition of virtual knot groups as semi direct product and free product with amalgamation. In particular, we prove that the groups of some virtual knots are extensions of finitely generated free groups by infinite cyclic groups.
\end{abstract}
\maketitle

\section{Introduction}\label{Section1}
Virtual links were introduced by  Kauffman \cite{Kauffman2} as a generalization of classical links. Topologically, virtual links can be interpreted as isotopy classes of embeddings of classical links in thickened surfaces of higher genus. Several invariants of classical links can be extended to virtual links. For example, Kauffman in \cite{Kauffman2}, defined virtual link groups using Wirtinger's algorithm by ignoring virtual crossings.

Various definitions of virtual link groups can be found in the literature \cite{Bardakov1, Bardakov2, Bardakov3, Bardakov4, BDG, CSW, Mikh, Silver}, where some of these definitions use representations of  the virtual braid group $VB_n$ by automorphisms of  free products $F_{n,k}=F_n \ast \mathbb{Z}^k$ for some $k$.\par

It is well-known that the Artin braid group $B_n$ can be represented as a subgroup of $\Aut (F_n)$. In \cite{Bardakov1, Manturov}, an extension of the classical Artin representation $\varphi_A : VB_n \to \Aut(F_{n+1})$ is constructed. Using the representation $\varphi_A$, for each virtual link $L$, a group $G_A(L)$ is defined. It is shown in \cite{Bardakov1} that the group $G_A(L)$ distinguishes virtual trefoil knot and unknot, unlike the group defined in \cite{Kauffman2}. Therefore, it is interesting to study the virtual knot groups defined using representations of $VB_n$. Later, in \cite{BDG}, a more general representation of $VB_n$ and its associated virtual link group is defined. Recently, in \cite{Bardakov3}, a new representation $\tilde{\varphi}_{M}$ of the virtual braid group is constructed, which generalizes all previously mentioned representations. If $K$ is a virtual knot and $G_{\tilde{M}}  (K)$ the group associated to representation $\tilde{\varphi}_{M}$, then it has been shown that $G_A(K)$ is isomorphic to $G_{\tilde{M}}  (K)$. A natural question is whether the same holds for virtual links with more than one component. In this paper, we show that if $L$ is the virtual Hopf link, then $G_A(L)$ is not isomorphic to $G_{\tilde{M}}(L)$.

 It is well-known that the knot group for classical knots has a short lower central series, that is, the second term coincides with the third term \cite[Section 6, Page 59]{Neuwirth}. Therefore, factorization by the terms of lower central series cannot be used to distinguish classical knots. On the other hand, there are classical links whose groups are residually nilpotent \cite{BM}. 
 \par
In \cite{Bardakov5}, it is shown that the group $G_A(K)$ of virtual trefoil knot $K$ has a homomorphism onto a nilpotent group of step 4. Also it was asked whether there is a  virtual knot whose knot group is residually nilpotent.  In this paper, we give a positive answer to this question. Also, we show  that there is a virtual knot whose group has lower central series of length $\leq \omega^2$, where $\omega$ is the first infinite ordinal.\par
The paper is organised as follows. In section \ref{Section2}, we recall definition of the virtual braid group $VB_n$ and definitions of the virtual link groups $G_A(L)$ and $G_{\tilde{M}}(L)$. Also, we proved that the second and the third term of virtual knot group defined by Kauffman coincides.
\par
In section \ref{Section2.1}, we study cyclic extensions of free groups. In section \ref{section3}, we consider virtual knots with small number of  crossings from  the virtual knot table \cite{table}.  We compute their knot groups corresponding to representation $\varphi_A$. Further, we study quotients of virtual knot groups by terms of lower central series, and prove that all these knots are nontrivial using the quotient by the fifth term of the lower central series. We show that there are virtual knots whose knot groups have long lower central series. We also study structures of these groups and prove that some of them  are extensions of finitely generated free groups by infinite cyclic groups.
\par
In section \ref{section4},
we consider the virtual Hopf link $L$ and prove that $G_A(L)$ and $G_{\tilde{M}}(L)$ are right angled Artin groups but  $G_A(L)$ is not isomorphic to $G_{\tilde{M}}(L)$.
\bigskip

\section{Preliminaries and Notations}\label{Section2}
Throughout the paper, we shall denote $x^y=y^{-1}xy$, $[x, y] = x^{-1} y^{-1} x y$, where $x, y$ are elements of some group. If $A$ and $B$ are subgroups of a group $G$, then $[A, B]$ is a subgroup that is generated by all commutators $[a, b]$, $a \in A$, $b \in B$. In particular, we denote $G'= [G, G]$ and  $G''= [G', G']$.
A nontrivial group $G$ is called {\it residually nilpotent} if for any $1 \not= g \in G$ there is a nilpotent group $N$ and a homomorphism $\varphi : G \longrightarrow N$ such that $\varphi(g) \not= 1$.\\
\\
For a group $G$, transfinite lower central series is defined as,
$$
G = \gamma_1 (G) \geq \gamma_2 (G) \geq \ldots \geq \gamma_{\omega} (G) \geq \gamma_{\omega+1} (G) \geq \ldots,
$$
where
$$
\gamma_{\alpha+1} (G) = \langle [g_{\alpha}, g] ~|~g_{\alpha} \in \gamma_{\alpha} (G), g \in G \rangle,
$$
and if $\alpha$ is a limit ordinal, then
$$
\gamma_{\alpha} (G) = \bigcap_{\beta < \alpha}\gamma_{\beta} (G).
$$
In particular, $G$ is residually nilpotent if and only if
$$
\gamma_{\omega}(G) = \bigcap_{i=1}^{\infty} \gamma_{i}(G) = 1.
$$
The maximal $\alpha$ such that $\gamma_{\alpha} (G) \not = \gamma_{\alpha+1} (G)$ is called the {\it length of the lower central series} of $G$.\\
\\
As we noted in the introduction,  if $K$ is a  classical knot, then its group $G(K)$ is not residually nilpotent since $\gamma_2(G(K)) = \gamma_3(G(K))$.

\medskip

The {\it virtual braid group} $VB_n$ is generated by the classical braid group $B_n = \langle \sigma_1, \sigma_2, \ldots, \sigma_{n-1} \rangle$ and the symmetric group $S_n = \langle \rho_1, \rho_2,\ldots, \rho_{n-1} \rangle$ such that the following relations hold:
\begin{align*}
    \sigma_i \sigma_{i+1} \sigma_i&=\sigma_{i+1} \sigma_i \sigma_{i+1} & i=1, 2, \ldots, {n-2}, \\
\sigma_i \sigma_j&=\sigma_j \sigma_i &  |i-j| \geq 2, \\
\rho_i^{2}&=1 &  i=1, 2, \ldots, {n-1},\\
\rho_i \rho_j&= \rho_j \rho_i &  |i-j| \geq 2,\\
\rho_i \rho_{i+1} \rho_i&= \rho_i \rho_{i+1} \rho_i & i=1, 2, \ldots, {n-2}.
\end{align*}
In addition, the following mixed defining relations hold:
\begin{align*}
    \sigma_i \rho_j&= \rho_j \sigma_i & |i-j| \geq 2 ,\\
    \rho_i \rho_{i+1} \sigma_i&= \sigma_{i+1} \rho_i \rho_{i+1} & i=1, 2, \ldots, {n-2}.
\end{align*}
The generators $\sigma_i$ and $\rho_i$ can be geometrically presented as shown in the figure below.

\begin{figure*}[hbtp]
\centering
\includegraphics[scale=0.7]{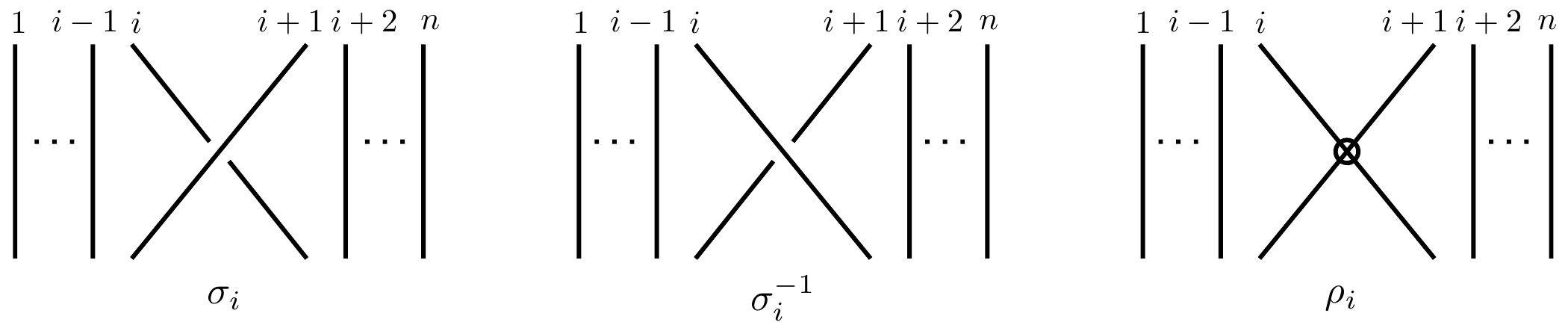}
\end{figure*}

Recall the two representations of the virtual braid group, which are extensions of the Artin representation  of $B_n$ into $ \Aut(F_n)$ \cite{Artin}.
Let $F_{n+1}= \langle x_1, x_2,\ldots,x_n, y \rangle$ be the free group of rank $n+1$. The map $$\varphi_A : VB_n \to \Aut(F_{n+1})$$ defined by setting

$$
\varphi_{A}(\sigma_i) = \left\{
        \begin{array}{ll}
      x_i \mapsto x_ix_{i+1}x_i^{-1} ,  \\
        x_{i+1} \mapsto x_i,  \\
        x_j \mapsto x_j~\textrm{for}~~j \neq i, i+1, \\
             y \mapsto y.  \\
        \end{array}
    \right.
$$
and
$$
\varphi_{A}(\rho_i) = \left\{
        \begin{array}{ll}
      x_i \mapsto x_{i+1}^{y^{-1}} ,  \\
    x_{i+1} \mapsto x_{i}^y,  \\
                    x_j \mapsto x_j~\textrm{for}~~j \neq i, i+1, \\
             y \mapsto y. \\
        \end{array}
    \right.
$$
is a representation of $VB_n$  to $\Aut(F_{n+1})$.
\bigskip

Next, we consider the second representation of $VB_n$. Let $F_{n,n} =  F_n * {\mathbb{Z}}^{n} $, where $F_{n}= \langle y_1, y_2,\dots,y_n \rangle$ and  $\mathbb{Z} ^n= \langle v_1, v_2,\dots,v_n \rangle$ is the free abelian group of rank $n$. The map
$$ \tilde \varphi_{M} : VB_n \to \Aut(F_{n,n}) $$
defined by setting
$$
\tilde \varphi_{M}(\sigma_i) = \left\{
        \begin{array}{ll}
      y_i \mapsto y_iy_{i+1}y_i^{-1} ,  \\
    y_{i+1} \mapsto y_i,\\
            y_j \mapsto y_j~\textrm{for}~~j \neq i, i+1. \\
        \end{array}
    \right.~~~
\tilde \varphi_{M}(\sigma_i) = \left\{
        \begin{array}{ll}
      v_i \mapsto v_{i+1} ,  \\
    v_{i+1} \mapsto v_i,  \\
                        v_j \mapsto v_j~\textrm{for}~~j \neq i, i+1. \\
        \end{array}
    \right.
$$
$$
\tilde \varphi_{M}(\rho_i) = \left\{
        \begin{array}{ll}
      y_i \mapsto y_{i+1}^{v_{i}^{-1}} ,  \\
    y_{i+1} \mapsto y_{i}^{v_{i+1}},  \\
                        y_j \mapsto y_j~\textrm{for}~~j \neq i, i+1. \\
        \end{array}
    \right.~~~
\tilde \varphi_{M}(\rho_i) = \left\{
        \begin{array}{ll}
      v_i \mapsto v_{i+1} ,  \\
    v_{i+1} \mapsto v_i,  \\
                                    v_j \mapsto v_j~\textrm{for}~~j \neq i,i+1.\\
        \end{array}
    \right.
$$
is  a representation of $VB_n$ \cite{Bardakov3}.\\

In classical knot theory, Alexander theorem states that for each oriented link $L$, there exists a braid whose closure is equivalent to $L$ \cite[Chapter 2, Theorem 2.3]{Kassel}. The closure of a virtual braid is same as the closure defined for classical braids which is shown in Figure \ref{CLOSURE}.  An analogous result holds for virtual links and virtual braids \cite{Kauffman3}. Similarly,  an analogue of Markov theorem for virtual braids has been established by Kamada \cite{Kamada}. Let $L$ be a virtual link which is equivalent to closure of a virtual braid $\beta \in VB_n$. A braid representative for any virtual knot can be obtained by the braiding process defined in \cite[Section 4]{Kamada}. Suppose that we have a representation $\varphi : VB_n \to \Aut(H)$ of the virtual braid group into the automorphism group of a group $H = \langle h_1, h_2,\dots, h_m ~|~ \mathcal{R} \rangle$, where $\mathcal{R}$ is the set of defining relations. Then to each $\beta \in VB_n$,  we assign the group
\begin{equation*}\label{group-rep}
    G_{\varphi}(\beta) = \langle h_1,  h_2, \dots, h_m ~\|~ \mathcal{R}, h_i= \varphi(\beta)(h_i)~\textrm{for}~ i= 1,2,\dots,m \rangle.
\end{equation*}
\begin{figure}[hbtp]
\centering
\includegraphics[scale=1]{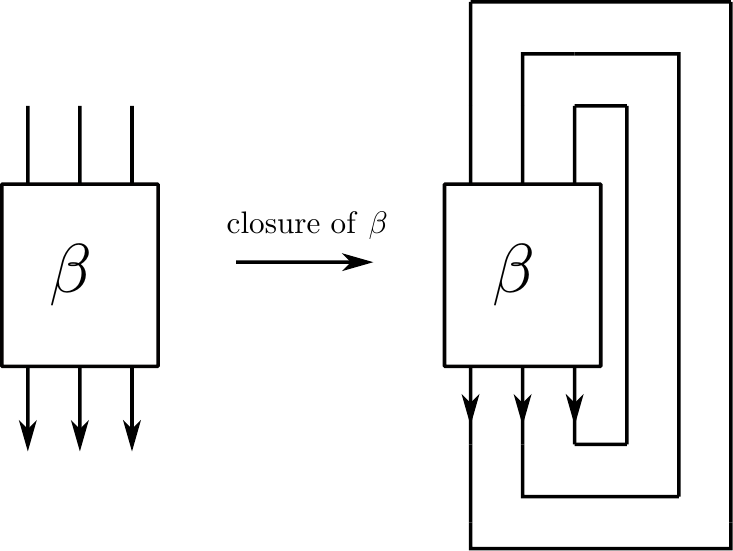}
\caption{Closure of a virtual braid $\beta$.}
\label{CLOSURE}
\end{figure}
Using the representation $\varphi_A$, we define a group $G_{\varphi_A}(\beta)$, which for simplicity, we shall denote by  $G_{A}(\beta)$. It is shown in \cite{Bardakov2} that this group is a link invariant, by showing that if two virtual braids are related by Markov moves as defined in \cite{Kamada}, then their groups are isomorphic. Later, it is shown in  \cite{Bardakov3} that the group $G_{\widetilde{\varphi}_{M}}(\beta)$, corresponding to the representation $\tilde{\varphi}_M$, is a link invariant. It is not difficult to prove that if $K$ is a trivial knot, then its group is isomorphic to $F_2$, the free
group of rank two. Throughout the paper, we denote $G_A(\beta)$ by $G_A(K)$.
\par
In \cite[Section 6]{Bardakov3}, it is shown that for any virtual knot $K$, its group $G_A(K)$ can also be computed using its diagram. The diagram is divided into arcs from one crossing (real or virtual) to the other and each arc is assigned a symbol. To each real and virtual crossing, we assign the relations as shown in Figure \ref{KnotDiagramRelations}. Then we consider the group generated by symbols assigned on arcs and relations obtained from the crossings. The group obtained is isomorphic to the group $G_A(K)$.\\

\begin{figure}[hbtp]
\centering
\includegraphics[scale=1]{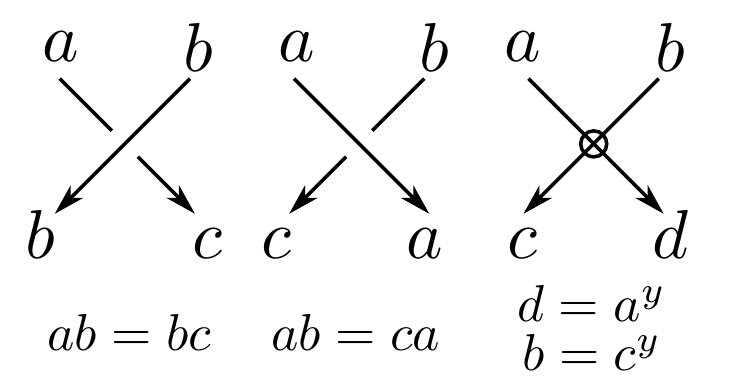}
\caption{Relations from crossings of a virtual knot diagram}
\label{KnotDiagramRelations}
\end{figure}

For a virtual knot $K$, note that the quotient of the group $G_A(K)$ by relation $y = 1$ is the Kauffman group $G_{Ka}(K)$ defined in \cite{Kauffman2}. For the trivial knot this group is infinite cyclic. We prove the following result analogous to the classical knots.

\begin{proposition}\label{theorem}
For any virtual knot $K$, we have $\gamma_2 (G_{Ka}(K)) = \gamma_3 (G_{Ka}(K))$.
\end{proposition}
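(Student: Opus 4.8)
The plan is to exploit the Wirtinger-type presentation of $G_{Ka}(K)$ together with the classical observation that in such a presentation every generator is a conjugate of every other. Recall that $G_{Ka}(K)$ is obtained from $G_A(K)$ by imposing $y=1$. After this substitution the relation coming from a real crossing becomes a conjugation relation of the form $x_k = x_i^{\pm 1} x_j x_i^{\mp 1}$, while the relation coming from a virtual crossing, where $\rho_i$ acts by $x_i \mapsto x_{i+1}^{y^{-1}}$ and $x_{i+1}\mapsto x_i^{y}$, degenerates, for $y=1$, into the label transposition $x_i \mapsto x_{i+1}$, $x_{i+1}\mapsto x_i$. Thus the generators of $G_{Ka}(K)$ are the arcs of the diagram, each defining relation identifies one generator with a conjugate (or with an unchanged copy) of another, and no relation involves $y$.

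First I would record that all of the generators $x_1,\dots,x_n$ are conjugate to one another in $G_{Ka}(K)$. Indeed, traversing the single component of $K$, consecutive arcs are related at each crossing either by conjugation (real crossing) or by a transposition of labels (virtual crossing, after $y=1$); since the diagram of a knot is connected, this yields a chain of conjugations linking any two generators. In particular $x_i \equiv x_j \pmod{\gamma_2(G_{Ka}(K))}$ for all $i,j$, so that the abelianization $G_{Ka}(K)/\gamma_2(G_{Ka}(K))$ is cyclic, generated by the common meridian class.

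The final step is a standard commutator calculation. Fix the generator $x_1$ and, using the previous paragraph, write $x_i = x_1 u_i$ with $u_i \in \gamma_2 := \gamma_2(G_{Ka}(K))$. Since $\gamma_2/\gamma_3$ is generated by the classes of the commutators $[x_i,x_j]$ of generators, it suffices to show that each such class is trivial. Using that commutators are bilinear modulo $\gamma_3$ and that $[\gamma_1,\gamma_2]\le\gamma_3$, I compute
\[
[x_i,x_j] = [x_1 u_i,\, x_1 u_j] \equiv [x_1,x_1] \equiv 1 \pmod{\gamma_3},
\]
because the remaining factors $[x_1,u_j]$, $[u_i,x_1]$ and $[u_i,u_j]$ all lie in $\gamma_3$. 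Hence every commutator of generators is trivial in $\gamma_2/\gamma_3$, so $\gamma_2/\gamma_3 = 1$, that is $\gamma_2(G_{Ka}(K)) = \gamma_3(G_{Ka}(K))$. Equivalently, one may note that the natural surjection $\wedge^2\bigl(G_{Ka}(K)^{\mathrm{ab}}\bigr)\twoheadrightarrow \gamma_2/\gamma_3$ has trivial source, since the exterior square of a cyclic group vanishes.

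I expect the only genuine obstacle to lie in the bookkeeping of the first two steps: one must read off from Figure \ref{KnotDiagramRelations} the precise form of the crossing relations, check that setting $y=1$ really collapses the virtual-crossing relations to label transpositions, and verify that the resulting identifications connect all arcs of the one-component diagram. Once the conjugacy of all generators is secured, the statement about the lower central series is a formal consequence, exactly as in the classical knot case.
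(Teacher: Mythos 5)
Your proof is correct and takes essentially the same route as the paper: all Wirtinger-type generators of $G_{Ka}(K)$ are conjugate to one another, so the abelianization is cyclic, and then $\gamma_2 = \gamma_3$ follows from the standard commutator computation (equivalently, the vanishing of the exterior square of a cyclic group). The only difference is presentational---the paper outsources both steps to citations, invoking Silver--Williams' presentation $\langle x_1,\dots,x_n \mid x_{i+1}=u_i^{-1}x_iu_i \rangle$ for the conjugacy of generators and Neuwirth's argument for the commutator step, whereas you derive both directly from the diagram description of $G_{Ka}(K)$ as the quotient of $G_A(K)$ by $y=1$.
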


\begin{proof}
In \cite[Theorem 2.2]{Williams}, a characterization for virtual knot groups is given. It is shown that $G_{Ka}(K)$ has presentation 
$$\langle x_1, x_2, \dots, x_n ~|~ x_{i+1}= u_i^{-1}x_iu_i, ~ i \in \mathbb{Z}_n\rangle, $$ where $u_i$ are elements in free group generated by $x_1, x_2, \dots, x_n$. Therefore, the abelianisation of $G$ is $\mathbb{Z}$ and the rest of the proof follows from \cite[Section 6, Page 59]{Neuwirth}.
\end{proof}

 \section{Cyclic extensions of free groups} \label{Section2.1}

We begin by denoting $[b, {}_k a]$ as the left normalized commutator  $[b, a, \ldots, a]$, where $a$ is repeated $k$ times. For example,
$$
[b, {}_1 a] = [b,  a],~~~[b, {}_k a] = [[b, {}_{k-1} a], a].
$$
In this section, we prove the following general theorem.

\begin{theorem} \label{t}
Let $G$ be an extension of the free group $F$ of finite rank $n\geq 2$ by an infinite cyclic group $\left\langle a \right\rangle $ that is, $F\unlhd G$ and $G/F=\mathbb{Z}=\left\langle a \right\rangle $. On the quotient $A = F / \gamma_2 F$, conjugation by  element $a$ induces a linear  map $\alpha$ which acts by the rule:
$$ 
\alpha(x)  \gamma_2 F = x^a  \gamma_2 F   ,~~ x \in F. 
$$
\begin{enumerate}
\item If $(\alpha - \id)^n=0$, where $\id$ is the identity map, then  $G$ is residually nilpotent that is,  $\gamma_{\omega} G=1$. 

\item If  $(\alpha- \id)^m A \subseteq M A$ for some $m, M \in \mathbb{N}$ and $M \geq 2$, then the length of the  lower central series for $G$ is less than or equal to $\omega^2$, in particular, $\gamma_{\omega^2} G=1$. 
 \end{enumerate}
\end{theorem}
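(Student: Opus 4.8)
The plan is to push everything down to the action of $a$ on the successive factors of the lower central series of $F$, and then to control the (topological) nilpotence of $\alpha-\id$ on those factors. First I would fix the normal form: since $\mathbb{Z}$ is free the extension splits, $G=F\rtimes\langle a\rangle$, and conjugation by $a$ is an automorphism of $F$ inducing $\alpha$ on $A=F/\gamma_2 F$; write $C=\alpha-\id$. Because $G/F$ is abelian, $\gamma_2(G)\le F$, hence $\gamma_k(G)\le F$ for all $k\ge 2$. A direct computation with $[f,a]=f^{-1}f^{a}$ (which maps to $C\bar f$ in the additively written $A$) and $[f,f']\in\gamma_2 F$ shows that the image of $\gamma_{k+1}(G)$ in $A$ is $C^{k}A$. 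So each finite term of the lower central series is squeezed between a power of $C$ on the degree-one graded piece and $\gamma_2 F$; the whole proof consists of iterating this observation through the higher graded pieces.

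The engine is the induced action on the graded Lie ring $\operatorname{gr}F=\bigoplus_{i\ge1}L_i$, $L_i=\gamma_iF/\gamma_{i+1}F$, the free Lie ring of rank $n$: each $L_i$ is free abelian of finite rank and embeds in $A^{\otimes i}$, with $F$ acting trivially and $a$ acting as the restriction of $\alpha^{\otimes i}$. The key lemma I would isolate is: \emph{(i)} if $C^{n}=0$, then $\alpha^{\otimes i}-\id$ is nilpotent on $L_i$; \emph{(ii)} if $C^{m}A\subseteq MA$ with $M\ge2$, then $(\alpha^{\otimes i}-\id)^{N}L_i\subseteq M\,A^{\otimes i}\cap L_i$ for some $N$. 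Both come from writing $\alpha^{\otimes i}$ as a product of commuting operators whose off-identity parts $N_t=\id\otimes\cdots\otimes C_{(t)}\otimes\cdots\otimes\id$ satisfy $N_t^{m}=0$ (case (i)) or $N_t^{m}\equiv0\pmod M$ (case (ii), after reducing mod $M$); expanding $\alpha^{\otimes i}-\id$ as a sum of products of the commuting $N_t$ yields the claim. Transferring (topological) nilpotence from $A$ to the higher $L_i$ is the technical heart and the step I expect to need the most care; the apparent purity gap between $M\,A^{\otimes i}\cap L_i$ and $ML_i$ is harmless because intersecting over iterates of $C$ meets $\bigcap_t M^{t}A^{\otimes i}=0$.

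For part (1), each $\gamma_jF$ is characteristic in $F$, hence normal in $G$, and $G/\gamma_jF$ is a finitely generated torsion-free nilpotent group extended by $\langle a\rangle$. By the lemma the induced $\phi-\id$ is nilpotent on every factor $L_i$ with $i<j$, so $a$ acts unipotently; refining the lower central series of $F/\gamma_jF$ by the images of the powers of $\phi-\id$ gives an $a$-invariant central series on whose factors $a$ acts trivially, so $G/\gamma_jF$ is nilpotent. Hence for every $j$ there is $k_j$ with $\gamma_{k_j}(G)\subseteq\gamma_jF$, and as $F$ is residually nilpotent, $\gamma_{\omega}(G)\subseteq\bigcap_j\gamma_jF=1$.

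For part (2), I would prove by induction on $s\ge1$ that $\gamma_{\omega s}(G)\subseteq\gamma_{s+1}F$. The base case $s=1$ is $\gamma_{\omega}(G)\subseteq\gamma_2F$, immediate from the first paragraph since the image of $\gamma_{\omega}(G)$ in $A$ lies in $\bigcap_kC^{k}A\subseteq\bigcap_tM^{t}A=0$. For the step, put $H_k=\gamma_{\omega s+k}(G)\subseteq\gamma_{s+1}F$ and project by $\pi\colon\gamma_{s+1}F\to L_{s+1}$; using $[h,f]\in\gamma_{s+2}F$ and $[h,a^{j}]\equiv(\phi_*^{\,j}-\id)\pi(h)\in C_{s+1}\,\mathbb{Z}[\phi_*^{\pm1}]\pi(h)$, where $C_{s+1}=(\alpha^{\otimes(s+1)}-\id)|_{L_{s+1}}=\phi_*-\id$, one gets $\pi(H_{k+1})\subseteq C_{s+1}\cdot(\text{$\phi_*$-closure of }\pi(H_k))$ and therefore $\pi(H_k)\subseteq C_{s+1}^{k}L_{s+1}$. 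By the lemma $\bigcap_kC_{s+1}^{k}L_{s+1}=0$, so $\gamma_{\omega(s+1)}(G)=\bigcap_kH_k\subseteq\gamma_{s+2}F$. Finally, since the ordinals $\omega s$ are cofinal in $\omega^{2}$, we conclude $\gamma_{\omega^{2}}(G)=\bigcap_s\gamma_{\omega s}(G)\subseteq\bigcap_s\gamma_{s+1}F=1$, so the length of the lower central series is at most $\omega^{2}$.
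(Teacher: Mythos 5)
Your proposal is correct and follows essentially the same route as the paper: both reduce everything to the action of $a$ on the graded pieces $\gamma_k F/\gamma_{k+1}F$ viewed through tensor powers of $A$, decompose $\alpha^{\otimes k}-\id$ into commuting one-slot operators $N_t=\id\otimes\cdots\otimes(\alpha-\id)\otimes\cdots\otimes\id$, and use a pigeonhole count to get nilpotence (part 1) or divisibility by $M$ with $\bigcap_s M^s A^{\otimes k}=0$ (part 2), then induct down the lower central series of $F$ and conclude by cofinality of the ordinals $\omega s$ in $\omega^2$. The only cosmetic differences are that you embed $L_k$ into $A^{\otimes k}$ where the paper uses the surjection $A^{\otimes k}\twoheadrightarrow \gamma_k F/\gamma_{k+1}F$, and you package part (1) as unipotence of the $a$-action on the nilpotent quotients $G/\gamma_j F$ rather than via the paper's explicit index bounds $\gamma_{n(n^k-1)/(n-1)}G\subseteq\gamma_{k+1}F$.
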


\begin{proof}
\begin{enumerate}
\item Using induction on $k \in \mathbb{N}$, let us show that the following inclusion
$$
\gamma_{n \frac{n^k-1}{n-1}} G \subseteq \gamma_{k+1} F
$$
holds. For $k=1$, we need to show that $\gamma_nG \subseteq \gamma_2{F}.$ For any $x \in F$, we have 
\begin{align*}
[x, a] &= x^{-1}a^{-1}xa = x^{-1}x^a \equiv \alpha(x)-x\\
&= (\alpha - \id)(x)(\mathrm{mod}\gamma_2{F}).
\end{align*}
Therefore, 
$$[x ,_n a] \equiv  (\alpha - \id)^n(x)= 0(\mathrm{mod} \gamma_2{F}).$$
  Let us suppose that our hypothesis holds for  $k-1$.
We consider the quotient $\gamma_{k} F /\gamma_{k+1} F$ which is the homomorphic image of the tensor product
$$
{A}^{\otimes k}={A} \otimes \ldots \otimes {A}.$$ 
Indeed, the  quotient $\gamma_{k} F /\gamma_{k+1} F$ is generated by the left normalized commutators  $[y_1,\dots, y_k]$, $y_i \in F$, which are multilinear functions of their arguments.
Hence, we have the natural surjection
$$
x_1\otimes \ldots \otimes x_k \mapsto [x_1\otimes \ldots \otimes x_k]
$$
from the tensor product  ${A}^{\otimes k}$ to
$\gamma_{k} F /\gamma_{k+1} F$, where $x_1, \ldots, x_k\in A$.

We note that the action of $G$ by conjugation on the groups $\gamma_{k} F /\gamma_{k+1} F$ and
${A}^{\otimes k}$ is compatible with homomorphism
${A}^{\otimes k} \longrightarrow \gamma_{k} F /\gamma_{k+1} F$. Therefore, it suffices to prove that
 $$
[{A}^{\otimes k}, {}_{n^k}a]=0.
$$

For any element $x_1\otimes \ldots \otimes x_k$ in ${A}^{\otimes k}$, we have
\begin{align*}
[x_1\otimes \ldots \otimes x_k,a]&=x_1^a\otimes \ldots \otimes x_k^a -x_1\otimes \ldots \otimes x_k\\
&=\alpha x_1\otimes \ldots \otimes \alpha x_k -x_1\otimes \ldots \otimes x_k\\
&=(\alpha- \id +\id) x_1\otimes \ldots \otimes (\alpha- \id +\id) x_k -x_1\otimes \ldots \otimes x_k\\
&=\left((\alpha-\id)\otimes \ldots \otimes (\alpha-\id)+ \ldots+  \id\otimes \ldots \otimes (\alpha-\id) \right)
 x_1\otimes \ldots \otimes x_k.
\end{align*}
Therefore, we have the following inclusion
$$
[{A}^{\otimes k}, a]\subseteq T_k(\alpha) {A}^{\otimes k},
$$
where
$$
T_k(\alpha) = (\alpha-\id)\otimes \id \otimes \ldots \otimes \id+ \ldots+  \id\otimes \ldots \otimes \id\otimes (\alpha - \id).
$$
Note that the operator $\alpha - \id$  is included only once in each term of $T_k(\alpha)$. Further, it is not difficult to show that
$$
[{A}^{\otimes k}, {}_{m}a]\subseteq T_k^m(\alpha){A}^{\otimes k}.
$$
Now if $m\geq n^k$, then using the fact that  $\alpha-\id$ is nilpotent, we get $T_k^m(\alpha)=0$, and hence
 $$
[\gamma_k F, {}_{n^k}a]\subseteq \gamma_{k+1} F.
$$

\item Let us suppose $(\alpha- \id)^m A \subseteq M A$, for some
$m, M \in \mathbb{N}$ and $M \geq 2$. Using induction on $k\in \mathbb{N}$, let us show that the following inclusion holds
$$
\gamma_{(k+1)\omega} G \subseteq \gamma_{k} F.
$$
Since $G/F = \mathbb{Z}$, we have $\gamma_\omega G \subseteq F$.
Suppose that our hypothesis holds for $k-1$. We know that $\gamma_{k} F /\gamma_{k+1} F$ is the homomorphic image of tensor product
${A}^{\otimes k}$, therefore it is enough to show that
$$
 \bigcap\limits_{l\geq 1} [{A}^{\otimes k}, {}_{l}a]=0.
$$
Similar to the proof of (1), we get
$$
[{A}^{\otimes k}, {}_{m}a]\subseteq T_k^m(\alpha){A}^{\otimes k}.
$$
For $m\geq n^k$, we have
$$
T_k^m(\alpha){A}^{\otimes k} \subseteq M {A}^{\otimes k}.$$
Therefore,
$$
 \bigcap\limits_{l\geq 1} [{A}^{\otimes k}, {}_{l}a]\subseteq
 \bigcap\limits_{s\geq 1} M^s {A}^{\otimes k}=0.$$
\end{enumerate}
\end{proof}

\section{Computations for virtual knots with small number of crossings}
\label{section3}
In this section, we study some virtual knots with small number of crossings from the knot table \cite{table}. Using braid representatives, we compute $G_A(K)$ for virtual knots $K_1$ and $K_3$, whereas for $K_2$ and $K_4$, we use the virtual knot diagram approach. We then study structures of these groups.

\subsection{Virtual knot  $K_1$ (The knot $2.1$ in \cite{table})}
It is the virtual trefoil knot and is the closure of the braid $\beta=\sigma_1^{-2} \rho_1  \in VB_2$ as shown in Figure \ref{BraidForK_1}.
Its group has the following presentation
$$
G(K_1) = \langle x_1, x_2, y ~||~x_1 = y^{-1} x_1^{-1} y^2 x_2 y^{-2} x_1 y, ~~ 
x_2 = y^{-1} x_1^{-1} y^2 x_2^{-1} y^{-2} x_1 y \cdot y x_2  y^{-2} x_1 y \rangle.
$$
Using the first relation, we can rewrite the second relation in the form
$$
x_2 =  x_1^{-1} \cdot y x_2  y^{-2} x_1 y,
$$
or
$$
x_1 x_2 =   y x_2  y^{-2} x_1 y.
$$
Using this relation, we rewrite the first one, and we have 
$$
G(K_1) = \langle x_1, x_2, y ~||~x_1 = y^{-1} x_1^{-1} y x_1 x_2, ~~ x_2 =  x_1^{-1} y x_2  y^{-2} x_1 y  \rangle.
$$
From the first relation, it follows that $x_2 = x_1^{-1} y^{-1} x_1 y x_1$. So we remove the generator $x_2$ to get
$$
G(K_1) = \langle x, y ~||~x^{-1}  y^{-1} x^{-1} y^2 x^{-1} y^{-1}  x y x   y^{-2} x y = 1 \rangle = \langle x, y ~||~x =  x^{ y x   y^{-2} x y } \rangle,
$$
where $x = x_1$. 
It is easy to check that
$$
G(K_1) = \langle\, x,\,y\, \| \, [x^{-1},y,x^{-1},yx^{-1}]=1 \,  \rangle.
$$
\begin{figure*}[htbp]
\centering
\includegraphics[scale=0.3]{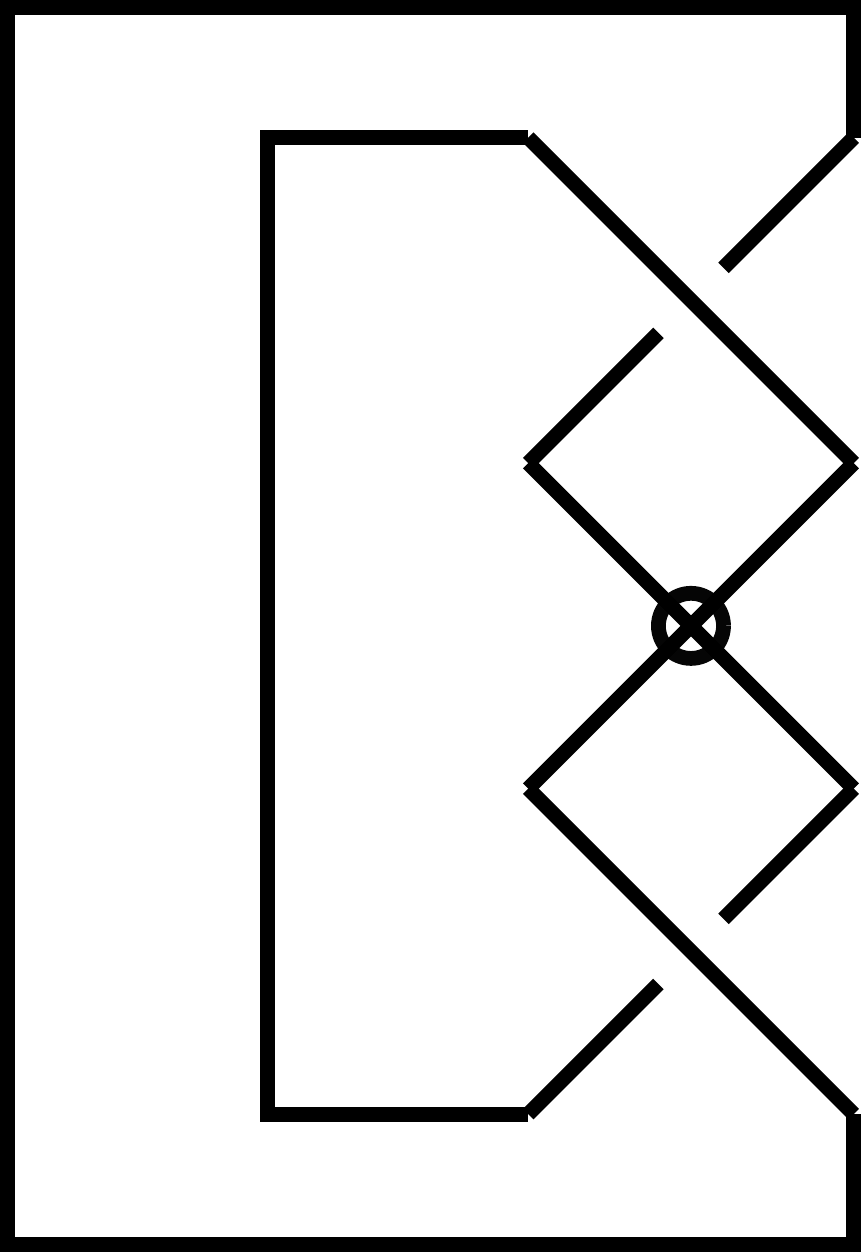}
\caption{Knot $K_1$.}
\label{BraidForK_1}
\end{figure*}
\begin{theorem}
\enumerate
\item For the knot $K_1$, we have
\begin{eqnarray*}
G(K_1)/\gamma_4 G(K_1) & \cong & F_2/\gamma_4 F_2,\\
\gamma_4 G(K_1)/\gamma_5 G(K_1) & \cong &  \mathbb{Z}^2,\\
G(K_1)/\gamma_5 G(K_1) & \not\cong &  F_2/\gamma_5 F_2.
\end{eqnarray*}
\item The group $G(K_1)$ is an extension of free group of rank  3 by infinite cyclic group.
\end{theorem}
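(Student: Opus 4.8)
The plan is to treat the two parts separately, relying throughout on the reduced one-relator presentation $G(K_1)=\langle x,y \mid [x^{-1},y,x^{-1},yx^{-1}]=1\rangle$, whose single relator $r=[x^{-1},y,x^{-1},yx^{-1}]$ is a left-normalized commutator of weight $4$ and hence lies in $\gamma_4 F_2$, where $F_2=\langle x,y\rangle$. For part (1), the first step is to write $G(K_1)=F_2/N$ with $N=\langle\langle r\rangle\rangle$ the normal closure of $r$. Since $r\in\gamma_4 F_2$ and $\gamma_4 F_2 \unlhd F_2$, we get $N\subseteq \gamma_4 F_2$, so $\gamma_4 F_2\cdot N=\gamma_4 F_2$ and therefore $G(K_1)/\gamma_4 G(K_1)\cong F_2/\gamma_4 F_2$, giving the first isomorphism.

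For the second isomorphism I would analyze the abelian group $\gamma_4 G(K_1)/\gamma_5 G(K_1)=\gamma_4 F_2/(\gamma_5 F_2\cdot N)$. Modulo $\gamma_5 F_2$ every conjugate of $r$ is congruent to $r$, because $[r,g]\in[\gamma_4 F_2,F_2]=\gamma_5 F_2$, so the image of $N$ in $\gamma_4 F_2/\gamma_5 F_2$ is the cyclic subgroup generated by the image of $r$. The target group $\gamma_4 F_2/\gamma_5 F_2$ is free abelian of rank $3$, since Witt's formula yields three basic commutators of weight $4$ on two generators, namely $[y,x,x,x]$, $[y,x,x,y]$, $[y,x,y,y]$. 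The key computation is to expand $r$ in this basis: using $[x^{-1},y]\equiv[y,x]$, $[\,\cdot\,,x^{-1}]\equiv -[\,\cdot\,,x]$ and bilinearity in the associated graded Lie ring, one finds $r\equiv [y,x,x,x]-[y,x,x,y] \pmod{\gamma_5 F_2}$, i.e.\ the primitive vector $(1,-1,0)$. Since quotienting $\mathbb{Z}^3$ by a primitive element gives a torsion-free group of rank $2$, we obtain $\gamma_4 G(K_1)/\gamma_5 G(K_1)\cong\mathbb{Z}^2$. The third assertion is then immediate: the lower central series is characteristic, so an isomorphism $G(K_1)/\gamma_5 G(K_1)\cong F_2/\gamma_5 F_2$ would restrict to an isomorphism of the $\gamma_4/\gamma_5$ layers, contradicting $\mathbb{Z}^2\not\cong\mathbb{Z}^3$.

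For part (2) I would exhibit $G(K_1)$ as a semidirect product $F_3\rtimes\mathbb{Z}$ by a Reidemeister--Schreier computation. Take the surjection $\phi:G(K_1)\to\mathbb{Z}$ with $x\mapsto 1$, $y\mapsto 0$, which is well defined because $r$ is a commutator, and use the presentation $G(K_1)=\langle x,y \mid x=x^{w}\rangle$ with $w=yxy^{-2}xy$, equivalently the relation $xwx^{-1}=w$. With Schreier transversal $\{x^i\}_{i\in\mathbb{Z}}$ set $y_i=x^{-i}yx^i$, so conjugation by $x$ acts as $y_i\mapsto y_{i-1}$; rewriting gives $w=y_0y_{-1}^{-2}y_{-2}\,x^2$, hence $xwx^{-1}=y_{-1}y_{-2}^{-2}y_{-3}\,x^2$, and the relation $xwx^{-1}=w$ becomes $y_{-1}y_{-2}^{-2}y_{-3}=y_0y_{-1}^{-2}y_{-2}$, together with all its shifts $y_{i-1}y_{i-2}^{-2}y_{i-3}=y_iy_{i-1}^{-2}y_{i-2}$, $i\in\mathbb{Z}$. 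Each such relation expresses the extreme generator $y_{i-3}$ as a word in $y_{i-2},y_{i-1},y_i$ (and symmetrically $y_i$ in terms of lower-indexed generators), so by Tietze transformations every $y_j$ with $j\notin\{0,-1,-2\}$ is eliminated, leaving a presentation on $y_0,y_{-1},y_{-2}$ with no surviving relations. Thus $\ker\phi\cong F_3$ and $G(K_1)\cong F_3\rtimes_\alpha\mathbb{Z}$, with $\alpha$ induced by conjugation by $x$.

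The main obstacle I anticipate is in part (1): carrying out the commutator expansion of $r$ modulo $\gamma_5 F_2$ with the correct signs, and in particular verifying that its image is a \emph{primitive} element of $\gamma_4 F_2/\gamma_5 F_2\cong\mathbb{Z}^3$, since only then is the quotient torsion-free of rank $2$ rather than $\mathbb{Z}^2$ together with torsion. For part (2) the delicate point is the bookkeeping of the Reidemeister--Schreier rewriting and confirming that the eliminating relations leave \emph{no} residual relation among the three surviving generators. As a consistency check one may compute the abelianized action $\alpha$ on $\mathbb{Z}^3$, whose characteristic polynomial turns out to be $(t-1)^3$; thus $(\alpha-\id)^3=0$, so Theorem \ref{t}(1) applies to the extension $F_3\rtimes_\alpha\mathbb{Z}$ and in fact shows $G(K_1)$ is residually nilpotent.
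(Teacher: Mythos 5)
Your proposal is correct and follows essentially the same route as the paper: in part (1) you reduce modulo $\gamma_5 F_2$ and identify the image of the relator as the difference of two basic commutators of weight $4$ (the paper's $[x,y,x,y][x,y,x,x]^{-1}$, which equals your $[y,x,x,x]-[y,x,x,y]$ in the graded Lie ring), and in part (2) you use the same substitution $y_k=x^{-k}yx^k$ to exhibit the kernel of the map to $\mathbb{Z}$ as a free group of rank $3$, the paper performing the same elimination by Tietze transformations that you phrase via Reidemeister--Schreier. Your extra remarks (primitivity of the relator's image ruling out torsion, and the characteristic polynomial $(t-1)^3$ recovering the paper's residual nilpotence corollary) are consistent with, and slightly more explicit than, the paper's own computation.
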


\begin{proof} \enumerate
\item We see that the left hand side of the defining relation of $G(K_1)$ lies in $\gamma_4 F_2$, where $F_2 = \langle x, y \rangle$. Hence, $G(K_1)/\gamma_4 G(K_1) \cong  F_2/\gamma_4 F_2$.
    
The quotient $\gamma_4 G(K_1) / \gamma_5 G(K_1)$ is generated by the basis commutators of weight 4.
In the quotient $G(K_1)/\gamma_5 G(K_1)$, we have
$$
1=[x^{-1},y,x^{-1},yx^{-1}]\equiv[x^{-1},y,x^{-1},y] [x^{-1},y,x^{-1},x^{-1}] \equiv[x,y,x,y] [x,y,x,x]^{-1}.
$$
Therefore, the defining relation in $G(K_1)/\gamma_5 G(K_1)$ is of the form 
$$[x,y,x,y] \equiv [x,y,y,x].$$

The commutators  $[x,y,y,x]$, $[x,y,x,x]$, $[x,y,y,y]$
are the basic commutators of weight 4, i.e. $\gamma_4 F_2 / \gamma_5 F_2 \cong \mathbb{Z}^3$, whereas from the defining relation of $G(K_1)$, it follows that 
\begin{eqnarray*}
\gamma_4 G(K_1)/\gamma_5 G(K_1) & \cong &  \mathbb{Z}^2.
\end{eqnarray*}
Hence, we obtain
\begin{eqnarray*}
G(K_1)/\gamma_5 G(K_1) & \not\cong &  F_2/\gamma_5 F_2.
\end{eqnarray*}
\bigskip

\item Rewriting the relation of $G(K_1)$ , we have
$$
x^{-1}y^{-1}xy= y^{-2} xyxy^{-1}x^{-1}y^2x^{-1}.
$$
Putting
$$
y_k=x^{-k}yx^{k},\quad k \in \mathbb{Z},
$$
the relation is equivalent to 
$$
y_1^{-1}y_0= y_0^{-2} y_{-1}y_{-2}^{-1}y_{-1}^2.
$$
On shifting the indexes by two, we get
$$
y_3^{-1}y_2= y_2^{-2} y_{1}y_{0}^{-1}y_{1}^2,
$$
which implies
$$
y_3= y_2 y_{1}^{-2}y_{0}y_{1}^{-1}y_2^2.
$$
Therefore,  $G(K_1)$ has the presentation
$$
G(K_1)=\left\langle \, x,\,y_0,\,y_1,\,y_2\, \| \,
y_0^x=y_1,\, y_1^x=y_2,\, y_2^x=y_2 y_{1}^{-2}y_{0}y_{1}^{-1}y_2^2,\,
     \right.
$$
$$
  \left.
y_2^{x^{-1}}=y_1,\, y_1^{x^{-1}}=y_0,\, y_0^{x^{-1}}=y_0^2 y_{1}^{-2}y_{2}y_{1}^{-2}y_0 \, \right\rangle.
$$
So we have, 
$$
G(K_1)=F_3 \rtimes \mathbb{Z},
$$
where $F_3=\langle y_0,\,y_1,\,y_2 \rangle$ is the free group of rank  3 and
$\mathbb{Z}=\langle x \rangle$.

\end{proof}
Conjugation of $F_3$ by $x$ induces a linear transformation on the quotient
  $F_3/ F_3'$, which has  the matrix
$$
A=
\left(%
\begin{array}{ccc}
  0 &  1 & 0 \\
  0 &  0 & 1 \\
  1 & -3 & 3 \\
\end{array}%
\right).
$$
Since all the eigenvalues of $A$ are equal to 1, we have  $(A-E)^3=0$.
Using the first part of Theorem \ref{t},
we have the following result.

\begin{corollary}
The group  $G(K_1)$ is residually nilpotent.
\end{corollary}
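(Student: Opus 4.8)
The plan is to invoke Theorem \ref{t}(1) directly, so the only real work is to verify its hypothesis for the group $G(K_1)$ in the form just established. Since the preceding computation shows
$$
G(K_1) = F_3 \rtimes \langle x \rangle = F_3 \rtimes \mathbb{Z},
$$
where $F_3 = \langle y_0, y_1, y_2 \rangle$ is free of rank $n = 3 \geq 2$ and $\langle x \rangle \cong \mathbb{Z}$, the group is indeed an extension of a finite-rank free group by an infinite cyclic group. Thus $G(K_1)$ fits the template of Theorem \ref{t} with $F = F_3$ and $a = x$, and it remains to compute the induced linear map $\alpha$ on $A = F_3/\gamma_2 F_3 \cong \mathbb{Z}^3$ and check that $(\alpha - \id)^n = 0$ with $n = 3$.

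First I would read off the action of conjugation by $x$ on the abelianization from the semidirect-product relations. The relations $y_0^x = y_1$, $y_1^x = y_2$, and $y_2^x = y_2 y_1^{-2} y_0 y_1^{-1} y_2^2$ descend modulo $\gamma_2 F_3$ to the linear assignments $y_0 \mapsto y_1$, $y_1 \mapsto y_2$, and $y_2 \mapsto y_0 - 3y_1 + 3y_2$ (reading the exponent sums $1$ on $y_0$, $-2-1 = -3$ on $y_1$, and $2+1 = 3$ on $y_2$). With respect to the ordered basis $(y_0, y_1, y_2)$, this is exactly the matrix
$$
A =
\begin{pmatrix}
0 & 1 & 0 \\
0 & 0 & 1 \\
1 & -3 & 3
\end{pmatrix},
$$
which represents $\alpha$. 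The next step is a standard linear-algebra verification: the characteristic polynomial of $A$ is $\lambda^3 - 3\lambda^2 + 3\lambda - 1 = (\lambda - 1)^3$, so every eigenvalue equals $1$, and hence $A - E$ is nilpotent with $(A - E)^3 = 0$. Here $A - E = \alpha - \id$, so $(\alpha - \id)^3 = 0$, which is precisely the hypothesis $(\alpha - \id)^n = 0$ of Theorem \ref{t}(1) for $n = 3$.

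Having checked the hypothesis, the conclusion follows immediately: Theorem \ref{t}(1) gives $\gamma_\omega G(K_1) = 1$, which is exactly the statement that $G(K_1)$ is residually nilpotent. There is essentially no obstacle in this argument, since the heavy lifting was done in Theorem \ref{t}; the only point requiring care is the bookkeeping when passing from the group relation $y_2^x = y_2 y_1^{-2} y_0 y_1^{-1} y_2^2$ to its abelianized form, where one must correctly tally the exponent sums of each generator to obtain the third row of $A$. Once the matrix is correct, verifying $(A - E)^3 = 0$ (equivalently, that the characteristic polynomial is $(\lambda-1)^3$) is routine, and the residual nilpotence of $G(K_1)$ is then a direct corollary.
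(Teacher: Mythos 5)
Your proposal is correct and follows exactly the paper's own argument: use the decomposition $G(K_1) = F_3 \rtimes \mathbb{Z}$, read off the matrix $A$ with rows $(0,1,0)$, $(0,0,1)$, $(1,-3,3)$ on $F_3/\gamma_2 F_3$, note its characteristic polynomial is $(\lambda-1)^3$ so that $(A-E)^3 = 0$, and apply Theorem \ref{t}(1). No differences worth noting.
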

\medskip
\subsection{Virtual knot $K_2$ (The knot $4.21$ in \cite{table}) } 

\begin{figure}[t]
\centering
\includegraphics[scale=0.7]{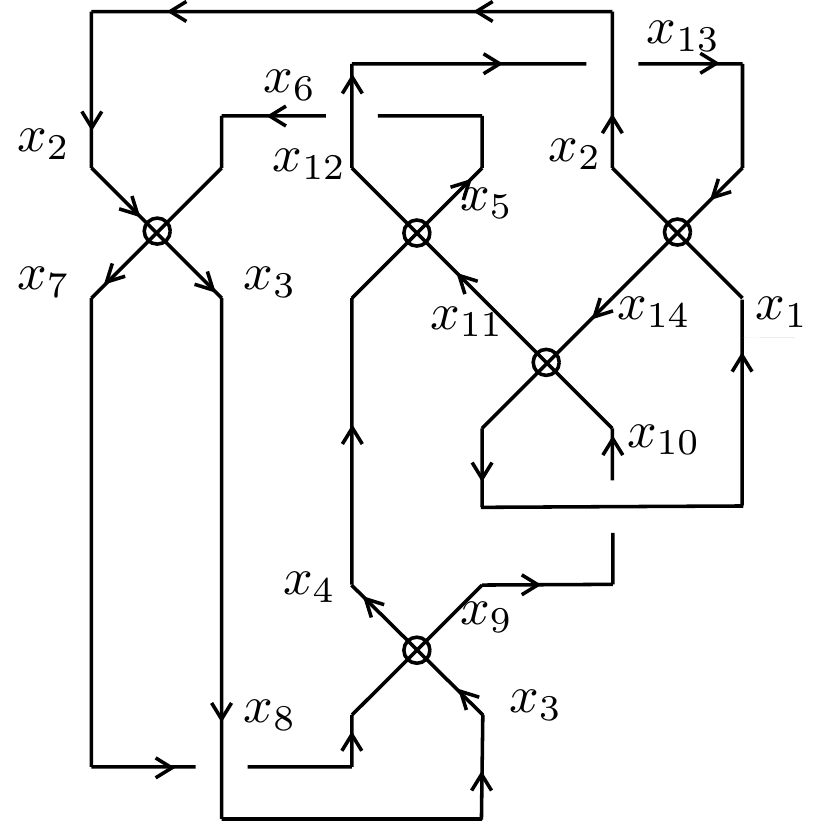}
\caption{Knot $K_2$.}
\label{BraidForK_2}
\end{figure}

We compute the group $G(K_2)$ using its knot diagram. By labelling the arcs of $K_2$ as in Figure \ref{BraidForK_2}, we get
$$G(K_2)= \langle x_1, x_2, \dots , x_{14}, y ~|~ x_5x_{12}=x_{12}x_6, x_2x_{12}=x_{13}x_2, x_7x_3=x_3x_8, x_9x_1=x_1x_{10}, x_3=x_2^y,$$
$$ x_6=x_7^y, x_{12}=x_{11}^y, x_4=x_5^y, x_{14}=x_{13}^y, x_1=x_2^y, x_4=x_3^y, x_8=x_9^y, x_1=x_{14}^y, x_{10}=x_{11}^y \rangle .$$

Simplifying the above presentation and substituting $x_1=x$, we get 

$$G(K_2) = \langle x, y ~|~ xx^{-y^{-1}}x^{y^{-2}}x^{y^{-1}}=x^{-y^{-1}}x^{y^{-2}}x^{y^{-1}}(x(x_1x_1^{-y^{-1}}x_1^{y^{-2}}x_1^{y^{-1}}x_1^{-1})^yx^{-1})^y \rangle. $$
Simplifying the above relation, we obtain 
$$ yx^{-1}yxy^{-1}xy^{-2}xy^{-1}xyx^{-1}yx=xyx^{-1}yxy^{-1}xy^{-2}xy^{-1}xyx^{-1}y.$$
Therefore, we have 
$$G(K_2) = \langle\, x,\,y\, \| \, [x^{y^{-1}xy^{-1}}  x^{yx^{-1}y},x]=1 \rangle.$$

Simplifying the relation and using the commutator identities, we get
\begin{eqnarray*}
\lbrack x [x,y^{-1}xy^{-1} \rbrack \,  x[x,yx^{-1}y],x \rbrack &=& 1,\\
\lbrack x^2 [x,y^{-1}xy^{-1}]\, [x,y^{-1}xy^{-1},x]\, [x,yx^{-1}y],x \rbrack &=& 1,\\
\lbrack  [x,y^{-1}xy^{-1}]\, [x,y^{-1}xy^{-1},x]\, [x,yx^{-1}y],x \rbrack &=& 1.
\end{eqnarray*}
Performing the  transformations modulo  $\gamma_5 G(K_2)$, we obtain
\begin{eqnarray*}
 \lbrack x,y^{-1}xy^{-1},x\rbrack \, [x,y^{-1}xy^{-1},x,x]\, [x,yx^{-1}y,x] & \equiv & 1,\\
\lbrack x,y^{-1}xy^{-1},x \rbrack \, [x,y,x,x]^{-2}\, [x,yx^{-1}y,x] & \equiv & 1,\\
\lbrack [y^{-1}xy^{-1},x]^{-1},x\rbrack \, [[yx^{-1}y,x]^{-1},x] & \equiv &  [x,y,x,x]^{2},\\
\lbrack x,[y^{-1}xy^{-1},x]\rbrack \, [x,[yx^{-1}y,x]] & \equiv &  [x,y,x,x]^{2},\\
\lbrack y^{-1}xy^{-1},x,x\rbrack ^{-1}\, [yx^{-1}y,x,x]^{-1} & \equiv & [x,y,x,x]^{2},\\
 \lbrack x^{-1}y^{-1}xy^{-1},x,x\rbrack ^{-1}\, [xyx^{-1}y,x,x]^{-1} & \equiv &  [x,y,x,x]^{2},\\
\lbrack [x,y]y^{-2},x,x\rbrack ^{-1}\, [[x^{-1},y^{-1}]y^2,x,x]^{-1}  & \equiv & [x,y,x,x]^{2},\\
\lbrack [x,y,x]\,[y^{-2},x],x\rbrack ^{-1}\, [[x^{-1},y^{-1},x]\,[y^2,x],x]^{-1} &\equiv & [x,y,x,x]^{2},\\
 \lbrack  x,y,x,x\rbrack ^{-1}\,[y^{-2},x,x]^{-1}\, [x^{-1},y^{-1},x,x]^{-1}\,[y^2,x,x]^{-1} & \equiv& [x,y,x,x]^{2},\\
\lbrack  y^{-2},x,x\rbrack ^{-1}\, [y^2,x,x]^{-1} & \equiv & [x,y,x,x]^{4}.\\
\end{eqnarray*}

Further, we have
$$
 [y^{-2},x,x]=[[x,y^{2}]^{y^{-2}},x] \equiv [[x,y^{2}]\,[x,y^{2},y^{-2}] ,x] \equiv [x,y^{2},x]\,[x,y,y,x]^{-4} \equiv
$$
$$
 \equiv[[y^{2},x]^{-1},x]\,[x,y,y,x]^{-4} \equiv [x,[y^{2},x]]\,[x,y,y,x]^{-4} \equiv [y^{2},x,x]^{-1}\,[x,y,y,x]^{-4},
$$
Therefore, in the quotient  $G(K_2) / \gamma_5 G(K_2)$ the following unique relation holds
$$
 ([x,y,y,x]^{-1}[x,y,x,x])^{4}\equiv 1.
$$
The commutators  $[x,y,y,x]$, $[x,y,x,x]$, $[x,y,y,y]$
are the basic commutators of weight  4, hence we have proved the first part of the following theorem:

\begin{theorem}
\begin{enumerate}
\item The first five terms of lower central series of  the group $G(K_2)$ are different from each other. Moreover, we have
\begin{eqnarray*}
G(K_2)/\gamma_4 G(K_2) & \cong &  F_2/\gamma_4 F_2,\\
\gamma_4 G(K_2)/\gamma_5 G(K_2) & \cong &  \mathbb{Z}^2 \times \mathbb{Z}_4,\\
G(K_2) / \gamma_5 G(K_2) & \not\cong &  F_2/\gamma_5 F_2.
\end{eqnarray*}
\item  The group $G(K_2)$ is an extension of free group of rank  5
by infinite cyclic group.
\end{enumerate}
\end{theorem}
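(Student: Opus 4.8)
The plan is to treat the two parts separately. Part (1) is a lower central series computation whose essential ingredient is already in hand, while part (2) is a decomposition result proved by exactly the device used above for $K_1$.

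For part (1), I would start from the boxed consequence of the pre-statement calculation: modulo $\gamma_5 G(K_2)$ the single defining relator of $G(K_2)$ is equivalent to $\bigl([x,y,y,x]^{-1}[x,y,x,x]\bigr)^{4}\equiv 1$, which lies in $\gamma_4 F_2$. Because the relator lies in $\gamma_4 F_2$, the canonical map $F_2\to G(K_2)$ induces $G(K_2)/\gamma_4 G(K_2)\cong F_2/\gamma_4 F_2$. To identify $\gamma_4 G(K_2)/\gamma_5 G(K_2)$ I would recall that $\gamma_4 F_2/\gamma_5 F_2$ is free abelian of rank $3$ on the basic commutators $[x,y,x,x]$, $[x,y,y,x]$, $[x,y,y,y]$; imposing the relation amounts to quotienting this $\mathbb{Z}^3$ by the subgroup generated by $4$ times the primitive vector $[x,y,x,x]-[x,y,y,x]$, and such a quotient is $\mathbb{Z}^2\times\mathbb{Z}_4$. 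Since $\gamma_4 G(K_2)/\gamma_5 G(K_2)\cong\mathbb{Z}^2\times\mathbb{Z}_4\not\cong\mathbb{Z}^3$, we obtain $G(K_2)/\gamma_5 G(K_2)\not\cong F_2/\gamma_5 F_2$; and since the first four lower central quotients of $F_2$ are already nontrivial and distinct and $\gamma_4 G(K_2)/\gamma_5 G(K_2)\neq 1$, the first five terms of the series for $G(K_2)$ are pairwise distinct. This bookkeeping presents no real obstacle once the boxed relation is available.

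For part (2) I would rewrite the defining relation as $ux=xu$, where $u=yx^{-1}yxy^{-1}xy^{-2}xy^{-1}xyx^{-1}y$, and consider the homomorphism $G(K_2)\to\mathbb{Z}=\langle x\rangle$ that kills $y$. As the relator is a commutator it maps to $0$, so the map is well defined, its kernel $N$ is the normal closure of $y$, and $G(K_2)/N\cong\mathbb{Z}$. Taking the Schreier transversal $\{x^{k}:k\in\mathbb{Z}\}$, I would set $y_k=x^{-k}yx^{k}$ and apply Reidemeister--Schreier, tracking the running $x$-exponent, to rewrite the relator $uxu^{-1}x^{-1}$ as a word in the $y_k$. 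Up to conjugation this should yield the single relator
$$
r = y_0 y_1 y_0^{-1} y_{-1}^{-2} y_{-2}^{-1} y_{-3} y_{-2} y_{-3}^{-1} y_{-4}^{-1} y_{-3} y_{-2}^{2} y_{-1} y_0^{-1} y_{-1}^{-1},
$$
together with all its $x$-conjugates $r_j$, so that $N=\langle\, y_k\ (k\in\mathbb{Z}) \mid r_j\ (j\in\mathbb{Z})\,\rangle$ and conjugation by $x$ acts by the index shift $y_k\mapsto y_{k+1}$.

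The decisive point is that $r$ involves exactly the six consecutive generators $y_{-4},\dots,y_1$ and that the two extreme generators $y_1$ and $y_{-4}$ each occur exactly once. Consequently each shifted relator $r_j$ can be solved for its top or its bottom generator, and a well-founded sequence of Tietze transformations eliminates every $y_k$ with $k\geq 2$ (solving the top letter of $r_{k-1}$) and every $y_k$ with $k\leq -4$ (solving the bottom letter of $r_{k+4}$), using each relator exactly once and leaving no relation among $y_{-3},y_{-2},y_{-1},y_0,y_1$. This exhibits $N$ as free of rank $5$, whence $G(K_2)=F_5\rtimes\mathbb{Z}$. I expect the main obstacle to be the Reidemeister--Schreier rewriting itself: it must be carried out carefully so that the reduced relator has width exactly six with both endpoints of multiplicity one, since it is precisely this unique-occurrence property, rather than mere finite generation, that forces $N$ to be free of the stated rank, and a single index or sign error would corrupt the count.
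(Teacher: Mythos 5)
Your proposal is correct and follows essentially the same route as the paper: part (1) is exactly the paper's argument from the pre-computed relation $\bigl([x,y,y,x]^{-1}[x,y,x,x]\bigr)^{4}\equiv 1 \pmod{\gamma_5 G(K_2)}$, and for part (2) the paper likewise passes to the conjugates $y_k=x^{-k}yx^k$, obtains the relation $y_3 y_4 y_3^{-1} y_2^{-2}y_1^{-1}y_0 y_1=y_2 y_3 y_2^{-1} y_1^{-2}y_0^{-1}y_{-1} y_0$ (which is precisely your relator shifted in index by $3$), and solves for the extremal letters to exhibit $G(K_2)=F_5\rtimes\mathbb{Z}$. Your Reidemeister--Schreier rewriting, the width-six window, and the unique occurrence of the two endpoint generators all check out against the paper's computation, so the two arguments differ only in presentation (kernel-elimination phrasing versus the paper's explicit semidirect-product presentation on $x,y_0,\dots,y_4$).
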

\begin{proof}
(2) Consider the relation of $G(K_2)$:
$$
[x^{y^{-1}xy^{-1}}  x^{yx^{-1}y},x]=1
$$
Transforming it, we obtain
\begin{eqnarray*}
x^{y^{-1}xy^{-1}}\,  x^{yx^{-1}y}\, x &=& x\, x^{y^{-1}xy^{-1}}\,  x^{yx^{-1}y},\\
x^{-3}x^{y^{-1}xy^{-1}}\,  x^{yx^{-1}y}\, x  &=& x^{-2}\, x^{y^{-1}xy^{-1}}\,  x^{yx^{-1}y},\\
x^{-3} ( yx^{-1}y  x y^{-1}xy^{-1})  \, (y^{-1}xy^{-1}  x yx^{-1}y) \, x  &=& x^{-2} ( yx^{-1}y  x y^{-1}xy^{-1})  \, (y^{-1}xy^{-1}  x yx^{-1}y).
\end{eqnarray*}

Putting
$$
y_k=x^{-k}yx^{k},\quad k \in \mathbb{Z}.
$$
The above relation has the form
$$
y_3 y_4 y_3^{-1} y_2^{-2}y_1^{-1}y_0 y_1=y_2 y_3 y_2^{-1} y_1^{-2}y_0^{-1}y_{-1} y_0.
$$
Shifting the indexes by 1, we get
\[
y_4 y_5 y_4^{-1} y_3^{-2}y_2^{-1}y_1 y_2=y_3 y_4 y_3^{-1} y_2^{-2}y_1^{-1}y_0 y_1.
\]
Therefore, we have
$$
y_5  = y_4^{-1} y_3 y_4 y_3^{-1} y_2^{-2}y_1^{-1}y_0 y_1y_2^{-1}y_1^{-1}y_2y_3^{2}y_4,
$$
Hence, $G(K_2)$ in the generators  $x$, $y_0$, $y_1$, $y_2$, $y_3$, $y_4$ has the following presentation:
$$
G(K_2)= \langle x,\,y_0,\,y_1,\,y_2,\,y_3,\,y_4\, \| \, y_0^x=y_1,\, y_1^x=y_2,\,y_2^x=y_3,\, y_3^x=y_4,\,
$$
$$
y_4^x=y_4^{-1} y_3 y_4 y_3^{-1} y_2^{-2}y_1^{-1}y_0 y_1y_2^{-1}y_1^{-1}y_2y_3^{2}y_4,\,
y_4^{x^{-1}}=y_3,\, y_3^{x^{-1}}=y_2,\,y_2^{x^{-1}}=y_1,\, y_1^{x^{-1}}=y_0,\,
$$
$$
y_0^{x^{-1}}=y_0y_1^{2}y_2 y_3^{-1}y_2^{-1} y_3 y_4 y_3^{-1} y_2^{-2}y_1^{-1}y_0 y_1y_0^{-1}
 \, \rangle.
$$
Therefore, we obtain
$$
G(K_2)=F_5 \rtimes \mathbb{Z},
$$
where $F_5=\langle y_0,\,y_1,\,y_2,\,y_3,\,y_4 \rangle$ is the free group of rank  5 and
$\mathbb{Z}=\langle x \rangle$.
\end{proof}
In the quotient  $F_5/ F_5 '$ the matrix
$$
[\alpha] =
\left(%
\begin{array}{ccccc}
  0 &  1 &  0 & 0 & 0\\
  0 &  0 &  1 & 0 & 0\\
  0 &  0 &  0 & 1 & 0\\
  0 &  0 &  0 & 0 & 1\\
  1 & -1 & -2 & 2 & 1\\
\end{array}%
\right)
$$
corresponds to conjugation by element $x$.
Since the characteristic polynomial of $A$ is 
$$
\chi(\lambda)=\lambda^5-\lambda^4-2\lambda^3+2\lambda^2+\lambda-1=(\lambda-1)^3(\lambda+1)^2,
$$
by second part of Theorem \ref{t}, we obtain the following result.

\begin{corollary}
The length of the lower central series of $G(K_2)$ is greater than or equal to $5$ and less than or equal to $\omega^2$.
\end{corollary}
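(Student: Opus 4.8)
The plan is to prove the two inequalities separately, using the decomposition $G(K_2)=F_5\rtimes\mathbb{Z}$ just obtained together with Theorem \ref{t}. The lower bound is essentially already in hand: the preceding theorem shows that the terms $\gamma_1,\dots,\gamma_5$ of the lower central series of $G(K_2)$ are pairwise distinct, which gives length at least $5$. (I would remark in passing that, once the upper bound below is established, one can say considerably more: the normal free subgroup $F_5$ is nonabelian, so $G(K_2)$ is not nilpotent, and hence no finite term can coincide with its successor without the series stabilising at a nontrivial subgroup, which $\gamma_{\omega^2}G(K_2)=1$ forbids; thus in fact $\gamma_n\neq\gamma_{n+1}$ for every finite $n$.) So the real content is the upper bound $\gamma_{\omega^2}G(K_2)=1$, which I would derive from the second part of Theorem \ref{t}.

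To apply Theorem \ref{t}(2) I must verify its hypothesis $(\alpha-\id)^m A\subseteq M A$ for the integral lattice $A=F_5/F_5'\cong\mathbb{Z}^5$, where $\alpha$ is the map induced by conjugation by $x$, represented by the displayed integer matrix $[\alpha]$. The characteristic polynomial is $(\lambda-1)^3(\lambda+1)^2$, so setting $\beta=\alpha-\id$ (and noting $\alpha+\id=\beta+2\id$), the Cayley--Hamilton theorem gives $\beta^3(\beta+2\id)^2=0$. Expanding $(\beta+2\id)^2=\beta^2+4\beta+4\id$ yields $\beta^5+4\beta^4+4\beta^3=0$, that is
$$
(\alpha-\id)^5=\beta^5=-4\,\beta^3(\beta+\id).
$$
Since $[\alpha]$ has integer entries, $\beta^3(\beta+\id)$ is an integer matrix, so every entry of $(\alpha-\id)^5$ is divisible by $4$; hence $(\alpha-\id)^5 A\subseteq 4A\subseteq 2A$. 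Thus the hypothesis of Theorem \ref{t}(2) holds with $m=5$ and $M=4\ (\geq 2)$, and we conclude $\gamma_{\omega^2}G(K_2)=1$, i.e. the length is at most $\omega^2$.

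The point that requires care, and the reason part (1) of Theorem \ref{t} is unavailable here, is the eigenvalue $-1$ of $\alpha$: because of it $\alpha-\id$ has eigenvalue $-2$ and is not nilpotent, so the residual nilpotence argument that succeeded for $K_1$ breaks down. What rescues the estimate is precisely that the factor $(\alpha+\id)^2$ forces divisibility by a power of $2$, placing us in the hypothesis of part (2) with $M=2$. The only genuinely computational step is reducing the Cayley--Hamilton relation to the clean divisibility statement $(\alpha-\id)^5 A\subseteq 4A$; everything else is bookkeeping. I would finally note that the gap between the bounds $5$ and $\omega^2$ is very wide, and that pinning down the exact length — in particular deciding whether $\gamma_\omega G(K_2)=1$, i.e. whether $G(K_2)$ is residually nilpotent — is not resolved by this method and is the natural next obstacle.
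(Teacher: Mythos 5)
Your proposal is correct, and although it rests on the same skeleton as the paper's argument --- the decomposition $G(K_2)=F_5\rtimes\mathbb{Z}$, the matrix $[\alpha]$ with characteristic polynomial $(\lambda-1)^3(\lambda+1)^2$, and part (2) of Theorem \ref{t} --- you verify the key divisibility hypothesis by a genuinely different and slicker route. The paper works with explicit sublattices: it computes $V=\mathrm{Ker}\,(\alpha-\id)^3$ and $W=\mathrm{Ker}\,(\alpha+\id)^2$, sets $A_0=V+W$, identifies $A/A_0\cong\mathbb{Z}_4\oplus\mathbb{Z}_{16}$, tracks the action of $\alpha-\id$ on this quotient to obtain $(\alpha-\id)^4A\subseteq A_0$, and concludes $(\alpha-\id)^7A\subseteq 2W\subseteq 2A$, i.e.\ the hypothesis holds with $m=7$, $M=2$. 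You instead invoke Cayley--Hamilton: with $\beta=\alpha-\id$ the relation $\beta^3(\beta+2\id)^2=0$ expands to $\beta^5=-4\beta^3(\beta+\id)$, whence $(\alpha-\id)^5A\subseteq 4A$, i.e.\ $m=5$, $M=4$; this is shorter, avoids error-prone integer linear algebra, and generalizes at once (for any integer matrix with characteristic polynomial $(\lambda-1)^a(\lambda+1)^b$, every lower-order coefficient of $(\beta+2\id)^b$ is even, so $(\alpha-\id)^{a+b}A\subseteq 2A$). What the paper's longer computation buys in exchange is finer structural data --- the $\alpha$-invariant sublattices $V$, $W$ and the precise finite quotient $\mathbb{Z}_4\oplus\mathbb{Z}_{16}$ --- which is exactly the kind of information relevant to the question posed immediately after the corollary, whether the length equals $\omega^2$. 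On the lower bound, both you and the paper read it off from the preceding theorem (the first five terms are pairwise distinct), and your additional remark is a correct strengthening: since $\gamma_{\omega^2}G(K_2)=1$ while $G(K_2)$ contains the nonabelian free group $F_5$ and so cannot be nilpotent, no finite term $\gamma_n$ can equal $\gamma_{n+1}$, so the length is in fact at least $\omega$.
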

\begin{proof}
Let us denote
$$
A=F_5/F_5'=\left\langle e_0, e_1, e_2, e_3, e_4\right\rangle \cong \mathbb{Z}^5,
$$
where $e_0, e_1, e_2, e_3, e_4$ are the images of  $y_0, y_1, y_2, y_3, y_4$, respectively in $A$.

It is easy to check that
$$
\mathrm{Ker} (\alpha -\id)^3= \left\langle v_1, v_2, v_3  \right\rangle, \quad
\mathrm{Ker} (\alpha +\id)^2= \left\langle w_1, w_2 \right\rangle,
$$
where
\begin{equation*}
\begin{array}{lcr}
v_1=e_0+2e_1+e_2, & v_2=-e_0-e_1+e_2+e_3, & v_3 =e_0-2e_2+e_4,\\
w_1=-e_0+3e_1-3e_2+e_3, & w_2=-e_0+2e_1-2e_3+e_4,\\
\end{array}
\end{equation*}
with
$$
\alpha : v_1 \mapsto v_1+v_2,\,\,  v_2 \mapsto v_2+v_3,\,\, v_3 \mapsto v_3,\,\,
w_1 \mapsto -w_1+w_2,\,\,  w_2 \mapsto -w_2.
$$
Let
$$
V= \left\langle v_1, v_2, v_3  \right\rangle, \quad W= \left\langle w_1, w_2 \right\rangle \text{ and } A_0=V+W.
$$
Then
$$
(\alpha -\id)^3 V=0 \text{ and } (\alpha -\id)^2 W\subseteq 2W.
$$
The quotient $A/A_0 \cong \mathbb{Z}_4 \oplus \mathbb{Z}_{16}$, where $\mathbb{Z}_4$ is generated by the image of  $t_0=e_0+3e_1$ and $\mathbb{Z}_{16}$  is generated by the image of  $t_1=e_1$
in $A/A_0$. Also, we have
$$
\alpha -\id : t_0 \mapsto 4t_0+4t_1,\,\,  t_1 \mapsto -t_0~  ({\rm mod} ~ A_0).
$$
Hence, $(\alpha -\id)^4 A \subseteq A_0$ and
$$
(\alpha -\id)^7 A \subseteq (\alpha -\id)^3 A_0 \subseteq 2 W \subseteq 2 A.
$$
The rest of the proof follows from the second part of Theorem \ref{t}.
\end{proof}

\begin{question}
Is it true that the length of the lower central series of $G(K_2)$ is equal to $\omega^2$?
\end{question}

\bigskip

\subsection{Virtual knot $K_3$ (The knot $3.7$ in \cite{table})}
$K_3$ is closure of a braid $\beta=\rho_1 \sigma_1^{-2} \rho_1 \sigma_1  \in VB_2$ as in Figure \ref{BraidForK_3} and we have,
$$G(K_3)=\langle\, x_1,\,x_2,\,y\, \| \, x_i=x_i\beta \, , i = 1, 2  \rangle, $$
where the relations are of  the form
$$
\left\{
\begin{array}{l}
  x_1=(x_1 x_2 x_1^{-1})^{-1} x_1^{-y^{-2}} (x_1 x_2 x_1^{-1}) x_1^{y^{-2}}  (x_1 x_2 x_1^{-1}) , \\
  x_2= (x_1 x_2 x_1^{-1})^{-y^{2}} x_1 (x_1 x_2 x_1^{-1})^{y^{2}}. \\
\end{array}
\right.$$

\begin{figure}[hbtp]
\centering
\includegraphics[scale=0.25]{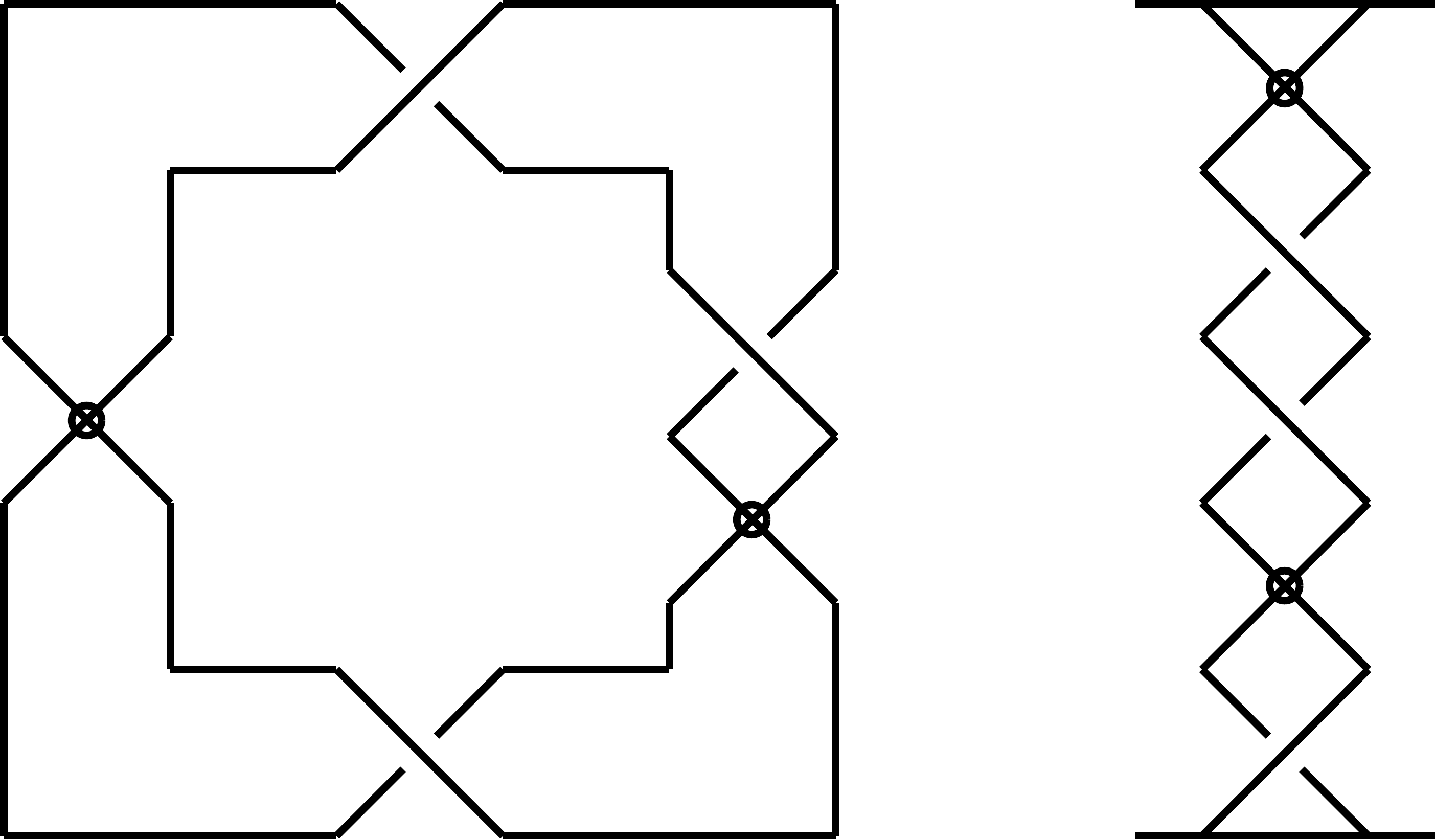}
\caption{Knot $K_3$ and its corresponding braid.}
\label{BraidForK_3}
\end{figure}

Transforming it, we obtain
$$
\left\{
\begin{array}{l}
  x_1=(x_1 x_2 x_1^{-1})^{-1} x_1^{-y^{-2}} (x_1 x_2 x_1^{-1}) x_1^{y^{-2}}  (x_1 x_2 x_1^{-1}) , \\
  x_2^{y^{-2}}= (x_1 x_2 x_1^{-1})^{-1}  x_1^{y^{-2}}  (x_1 x_2 x_1^{-1}), \\
\end{array}
\right.
\Longleftrightarrow
\left\{
\begin{array}{l}
  x_1=x_2^{-y^{-2}} x_1^{y^{-2}}  (x_1 x_2 x_1^{-1}) , \\
  x_2^{y^{-2}}= (x_1 x_2 x_1^{-1})^{-1}  x_1^{y^{-2}}  (x_1 x_2 x_1^{-1}), \\
\end{array}
\right.
$$
$$
\Longleftrightarrow \left\{
\begin{array}{l}
  x_1=x_2^{-y^{-2}} x_1^{y^{-2}}  (x_1 x_2 x_1^{-1}) , \\
  x_2^{y^{-2}}= (x_1 x_2 x_1^{-1})^{-1}  x_2^{y^{-2}}  x_1, \\
\end{array}
\right.
\Longleftrightarrow
\left\{
\begin{array}{l}
  x_2^{y^{-2}}= (x_1 x_2 x_1^{-1})^{-1}  x_2^{y^{-2}}  x_1 , \\
  x_1 x_2^{y^{-2}}=  x_2^{-y^{-2}}  x_1^{y^{-2}} x_2^{y^{-2}} x_1 , \\
\end{array}
\right.
$$
$$
\Longleftrightarrow \left\{
\begin{array}{l}
  x_1 x_2^{-1} x_1^{-1}  x_2^{y^{-2}}  x_1= x_2^{y^{-2}}, \\
  x_2^{-1} x_1 x_2 x_1^{y^{2}}=  x_1^{y^{2}} x_2, \\
\end{array}
\right.
\Longleftrightarrow
\left\{
\begin{array}{l}
  x_1 x_2^{-1}= x_2^{y^{-2}} x_1^{-1}  x_2^{-y^{-2}}  x_1 , \\
  x_2^{-1} x_1 = x_1^{y^{2}} x_2  x_1^{-y^{2}} x_2^{-1}. \\
\end{array}
\right.
$$

So, the group $G(K_3)$ has the presentation
$$
G(K_3) = \langle\, x_1,\, x_2,\,y\, \| \,
 x_1 x_2^{-1}= [x_2^{-y^{-2}}, x_1],\,\,
  x_2^{-1} x_1 = [x_1^{-y^{2}},  x_2^{-1}]\,  \rangle.
$$

\begin{theorem}
The following holds for the group $G(K_3)$:
\begin{enumerate}
\item The quotient $G(K_3)/\langle y \rangle^{G(K_3)}$ of $G(K_3)$ by the normal closure of $y$ in $G(K_3)$ is isomorphic to $$\langle\, x,\, z\, \| \,
 z^3=1,\,\,   z^x=z^{-1}\,  \rangle \ast \mathbb{Z}_2.
$$ In particular, it contains a subgroup that is isomorphic to the free product $\mathbb{Z}_3\ast \mathbb{Z}_2$.\\
\item
\begin{eqnarray*}
G(K_3) / \gamma_4 G(K_3) & \cong &  F_2/\gamma_4 F_2,\\
\gamma_4 G(K_3) /\gamma_5 G(K_3) & \cong &  \mathbb{Z}^2 \times \mathbb{Z}_4,\\
G(K_3) /\gamma_5 G(K_3) & \not\cong &  F_2/\gamma_5 F_2.
\end{eqnarray*}
\end{enumerate}
\end{theorem}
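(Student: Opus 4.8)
My plan is to handle the two parts separately, since they use quite different tools: part~(1) concerns the quotient that kills $y$, while part~(2) is a lower-central-series computation in the same spirit as the preceding results for $K_1$ and $K_2$.

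For part~(1) I would first pass to $G(K_3)/\langle y\rangle^{G(K_3)}$ by simply setting $y=1$ in the two defining relators; this trivialises every conjugation by $y$, so the relators $x_1x_2^{-1}=[x_2^{-y^{-2}},x_1]$ and $x_2^{-1}x_1=[x_1^{-y^2},x_2^{-1}]$ collapse to two relations purely in $x_1,x_2$. After rewriting these (each becomes a conjugacy-type relation such as $x_1x_2x_1^{-1}=x_2x_1x_2^{-1}$) and combining them, I expect a single finite-order element to emerge. Concretely I would set $z=x_1x_2^{-1}$ and $x=x_2$ and verify that the pair of relations is equivalent to $z^3=1$ together with $z^x=z^{-1}$, the remaining relations being consequences; this exhibits the group $\langle x,z\mid z^3=1,\ z^x=z^{-1}\rangle$ (together with the claimed free factor). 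The embedded $\mathbb{Z}_3\ast\mathbb{Z}_2$ would then be read off from the free-product decomposition, with the normal form justified either by a ping-pong argument or by a surjection onto $PSL_2(\mathbb{Z})\cong\mathbb{Z}_2\ast\mathbb{Z}_3$. As a sanity check I would compute the abelianisation and compare it with that of the Kauffman group $G_{Ka}(K_3)$, which is $\mathbb{Z}$ by Proposition~\ref{theorem}.

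For part~(2) I would follow the template used for $K_1$ and $K_2$: eliminate $x_2$ to present $G(K_3)$ as a one-relator group $\langle x,y\mid w\rangle$ and observe that $w\in\gamma_4F_2$, which at once yields $G(K_3)/\gamma_4G(K_3)\cong F_2/\gamma_4F_2$. The core step is to expand $w$ modulo $\gamma_5F_2$ and write it in the three basic commutators of weight four, $[x,y,x,x]$, $[x,y,y,x]$, $[x,y,y,y]$, using the multilinearity of commutators modulo $\gamma_5$ and the Hall--Witt identities. I anticipate that $w$ reduces to a single relation of the form $([x,y,y,x]^{-1}[x,y,x,x])^{4}\equiv 1$, exactly the $4$-torsion seen for $K_2$; this forces $\gamma_4G(K_3)/\gamma_5G(K_3)\cong\mathbb{Z}^2\times\mathbb{Z}_4$ and hence $G(K_3)/\gamma_5G(K_3)\not\cong F_2/\gamma_5F_2$, since $\gamma_4F_2/\gamma_5F_2\cong\mathbb{Z}^3$ is free abelian.

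The hard part will be the weight-four expansion in part~(2): it is routine but lengthy, and one must collect every contribution accurately to recover the precise exponent $4$ that produces the $\mathbb{Z}_4$ summand, since any miscount changes the torsion. In part~(1) the reduction modulo $y$ is immediate, but the substantive point is to justify the free-product decomposition rigorously — I would do this with an explicit normal-form/ping-pong argument rather than by relator manipulation alone — and to confirm it against the abelianisation computation.
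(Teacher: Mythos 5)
Your plan for part~(1) has a genuine gap, and it sits exactly at the point the statement is about. Setting $y=1$ does compute the quotient $G(K_3)/\langle y\rangle^{G(K_3)}$ correctly, but what comes out is precisely $\langle x,z \mid z^3=1,\ z^x=z^{-1}\rangle$ and nothing more: once $y$ is killed there is no generator left that could supply a free factor, so your parenthetical ``together with the claimed free factor'' cannot be justified on this route. Worse, $\langle x,z\mid z^3=1,\ z^x=z^{-1}\rangle\cong\mathbb{Z}_3\rtimes\mathbb{Z}$ is metabelian, so it contains no copy of $\mathbb{Z}_3\ast\mathbb{Z}_2$ (the latter contains free subgroups of rank $2$); no ping-pong or $PSL_2(\mathbb{Z})$ argument can produce one. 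The missing idea, which is how the paper argues, is that the $\mathbb{Z}_2$ factor is the image of $y$ itself, and to keep it alive one must quotient by the normal closure of $y^{2}$, not of $y$: after imposing $y^2=1$, conjugation by $y^{\pm2}$ becomes trivial, the two knot relations collapse to $[z,x^2]=1$ and $z^x=z^2$ (hence $z^3=1$, $z^x=z^{-1}$) with $y$ no longer occurring in them, so $y$ survives as an involution appearing in no other relation and splits off as a free factor; inside the resulting group $\langle x,z\mid z^3=1,\ z^x=z^{-1}\rangle\ast\mathbb{Z}_2$ one has $\langle z\rangle\ast\langle y\rangle\cong\mathbb{Z}_3\ast\mathbb{Z}_2$. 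Note that your own sanity check would have exposed the issue rather than confirmed the claim: the quotient by $\langle y\rangle^{G(K_3)}$ is the Kauffman group, whose abelianisation is $\mathbb{Z}$ by Proposition~\ref{theorem}, while the group claimed in the statement has abelianisation $\mathbb{Z}\oplus\mathbb{Z}_2$. (This tension is present in the paper itself: its proof actually computes the quotient by $\langle y^2\rangle^{G(K_3)}$, and the final two displays appear to have the two quotients interchanged; the statement should be read as concerning $\langle y^{2}\rangle^{G(K_3)}$.)

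In part~(2) your anticipated final relation and the resulting quotients are right, but the first step of the route is unsupported: you assume $x_2$ can be eliminated to present $G(K_3)$ as a one-relator group $\langle x,y\mid w\rangle$ with $w\in\gamma_4F_2$. Unlike $K_1$, where one defining relation has the solved form $x_2=x_1^{-1}y^{-1}x_1yx_1$, neither relation of $G(K_3)$ expresses a generator as a word in the remaining ones (each of $x_1,x_2$ occurs on both sides, inside conjugates), so no Tietze elimination is available by inspection, and the paper never produces such a presentation. What the paper does instead --- and what you would need to do --- is an elimination valid only modulo the lower central series: from $x_2^{-1}x_1=[x_1^{-y^2},x_2^{-1}]$ one derives $x_1\equiv x_2[y,x_2,x_2]^{-2}\pmod{\gamma_4 G(K_3)}$, which already gives $G(K_3)/\gamma_4 G(K_3)\cong F_2/\gamma_4F_2$; then, substituting this congruence into the consequence $[x_2^{-1}x_1,\ y^{-2}x_1^{2}]=1$ of the two defining relations and expanding modulo $\gamma_5$, one obtains the single relation $([y,x_2,x_2,y][y,x_2,x_2,x_2]^{-1})^{4}\equiv1$, whence $\gamma_4/\gamma_5\cong\mathbb{Z}^2\times\mathbb{Z}_4$ and $G(K_3)/\gamma_5 G(K_3)\not\cong F_2/\gamma_5F_2$. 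So keep your commutator-calculus core, but run it on the two-relator presentation with successive eliminations mod $\gamma_4$ and mod $\gamma_5$, rather than postulating a global one-relator presentation.
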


\begin{proof}
(1) We consider the second relation and we obtain
$$
  x_2^{-1} x_1 = \left[  x_1^{-1},  x_2^{-y^{-2}}   \right]^{y^{2}} \Longleftrightarrow  ( x_2^{-1} x_1)^{y^{-2}} = \left[  x_1^{-1},  x_2^{-y^{-2}}   \right]
$$
$$
\Longleftrightarrow ( x_2^{-1} x_1)^{y^{-2}} = \left[  x_2^{-y^{-2}}, x_1  \right]^{x_1^{-1}} \Longleftrightarrow  \left[  x_2^{-y^{-2}}, x_1^{-1}  \right]= ( x_2^{-1} x_1)^{y^{-2}x_1}.
$$
Using the first relation
$$
 ( x_2^{-1} x_1)^{y^{-2}x_1}=x_1 x_2^{-1}=( x_2^{-1} x_1)^{x_1^{-1}},
$$
we get
$$
 [ x_2^{-1} x_1, y^{-2}x_1^2]=1.
$$
Denote $x_1=x$, $y=y$, $x_2^{-1} x_1=z$, then  $x_2^{-1} =z x_1^{-1}$
and the relations are of the form
$$
 [ z, y^{-2}x^2]=1,\quad  xzx^{-1}=[(z x^{-1})^{y^{-2}},x].
$$
In the quotient  $G(K_3)/\langle y^2 \rangle^{G(K_3)}$, relations are of the form
$$
 [ z, x^2]\equiv 1,\quad  xzx^{-1}\equiv [z x^{-1},x].
$$
The second relation is equivalent to relation  $z^x=z^2$ and from the first relation it follows that
$$z^3=1.$$
So,
$$
G(K_3) / \langle y^2 \rangle^{G(K_3)} \cong \langle\, x,\, z\, \| \,
 z^3=1,\,\,   z^x=z^{-1}\,  \rangle.
$$
Therefore, we obtain
$$
G(K_3)/\langle y \rangle^{G(K_3)} \cong \langle\, x,\, z\, \| \,
 z^3=1,\,\,   z^x=z^{-1}\,  \rangle \ast \mathbb{Z}_2.
$$

\vspace{0.5cm}

(2)  Considering the relation
$ x_2^{-1} x_1 = [x_1^{-y^{2}},  x_2^{-1}]$
and transforming it, we get
$$
  x_1 = x_2 [x_1^{-y^{2}},  x_2^{-1}] = x_2 [y^{-2}x_1^{-1}y^{2},  x_2^{-1}] = x_2 [ [y^2,x_1] x_1^{-1},  x_2^{-1}]=
$$
$$
  = x_2 [ y^2,  x_1,  x_2^{-1}]^{x_1^{-1}}  [ x_1^{-1},  x_2^{-1}] = x_2 [ y^2,x_1,x_2^{-1}] [ y^2,  x_1,  x_2^{-1},x_1^{-1}] [ x_1^{-1},  x_2^{-1}]=
$$
$$
  = x_2 [ y^2,x_1,x_2^{-1}] [ y^2,  x_1,  x_2^{-1},x_1^{-1}] [ x_2^{-1},x_1 ]^{x_1^{-1}}=
$$
$$
  = x_2 [ y^2,x_1,x_2^{-1}] [ y^2, x_1, x_2^{-1},x_1^{-1}] [ x_2^{-1},x_1 ] [ x_2^{-1},x_1, x_1^{-1} ],
$$
which implies
$$
  x_1= x_2 [ y^2,x_1,x_2^{-1}] [ y^2, x_1, x_2^{-1},x_1^{-1}] [ x_2^{-1},x_1 ] [ x_2^{-1},x_1, x_1^{-1} ].
$$
Performing the transformations modulo $\gamma_4 G(K_3)$,  we get
$$
  x_1 \equiv x_2 [ y,x_2,x_2]^{-2} [ x_2^{-1},x_1 ] [ x_2,x_1, x_1 ] \equiv x_2 [ y,x_2,x_2]^{-2} [ x_2^{-1},x_1 ] \equiv
$$
$$
  \equiv x_2 [ y,x_2,x_2]^{-2} [ x_2^{-1},  x_2 [ x_2^{-1},x_1 ] ] \equiv  x_2 [ y,x_2,x_2]^{-2} [ x_2^{-1},  [ x_2^{-1},x_1 ] ] \equiv x_2 [ y,x_2,x_2]^{-2} .
$$
Hence, the relation
$ x_2^{-1} x_1 = [x_1^{-y^{2}},  x_2^{-1}]$
has the form
$$
x_1 \equiv x_2 [ y,x_2,x_2]^{-2} \, (\mathrm{mod}~ \gamma_4 G(K_3)).
$$
Further, in the quotient $G(K_3) / \gamma_4 G(K_3)$, the second relation
$ x_1 = [x_2^{-y^{-2}}, x_1]x_2$
has the form
\begin{eqnarray*}
  x_2 [ y,x_2,x_2]^{-2} & \equiv& [x_2^{-y^{-2}}, x_2]x_2,\\
  x_2 [ y,x_2,x_2]^{-2} & \equiv& [[y^{-2},x_2] x_2^{-1}, x_2]x_2,\\
  x_2 [ y,x_2,x_2]^{-2} & \equiv& [y^{-2},x_2 , x_2]^{-2}x_2 \equiv 1.
\end{eqnarray*}
So, we have
$$
G(K_3) / \gamma_4 G(K_3) \cong \langle\, x_1,\,x_2,\, y\, \| \,
 x_1 = x_2 [ y,x_2,x_2]^{-2}\,  \rangle \cong  F_2/\gamma_4 F_2.
$$
Let us now consider the following relation
$$
 [ x_2^{-1} x_1, y^{-2}x_1^2] = 1,
$$
then transforming it modulo  $\gamma_5 G(K_3)$ and using the relation
$$
  x_1 \equiv x_2 [ y,x_2,x_2]^{-2} \, (\mathrm{mod} ~\gamma_4 G(K_3)),
$$
we get
\begin{eqnarray*}
 \lbrack x_2^{-1} x_2 [ y,x_2,x_2]^{-2}, y^{-2} x_2^2 [ y,x_2,x_2]^{-4} \rbrack &\equiv&1 (\mathrm{mod}~ \gamma_5 G(K_3)),\\
  \lbrack y,x_2,x_2, y \rbrack^4 [ y,x_2,x_2,x_2]^{-4}&\equiv& 1 (\mathrm{mod}~ \gamma_5 G(K_3)),\\
 ([ y,x_2,x_2, y] [ y,x_2,x_2,x_2]^{-1})^{4} &\equiv& 1 (\mathrm{mod}~ \gamma_5 G(K_3)).
\end{eqnarray*}
Hence, we finally have
\begin{eqnarray*}
G(K_3) / \gamma_4 G(K_3) & \cong &  F_2/\gamma_4 F_2,\\
\gamma_4 G(K_3) /\gamma_5 G(K_3) & \cong &  \mathbb{Z}^2 \times \mathbb{Z}_4,\\
G(K_3) /\gamma_5 G(K_3) & \not\cong &  F_2/\gamma_5 F_2.
\end{eqnarray*}
\end{proof}

\subsection{Virtual knot $K_4$ (The knot $4.43$ in \cite{table})}

We compute $G(K_4)$ using its knot diagram. By labelling the arcs as shown in Figure \ref{BraidForK_4}, we get
$$G(K_4)= \langle x_1, x_2, x_3, x_4, x_5, x_6, y ~|~ x_1x_5=x_6x_1, x_5x_1=x_2x_5, x_2x_4=x_5x_2, x_6x_2=x_3x_6, x_4 = x_3^y,  x_6 = x_1^y \rangle.$$

Eliminating $x_4$ and $x_6$, we get 
$$G(K_4)= \langle x_1, x_2, x_3, x_5, y ~|~ x_1x_5=x_1^yx_1, x_5x_1=x_2x_5, x_2x_3^y=x_5x_2, x_1^yx_2=x_3x_1^y \rangle.$$

From the defining relations, we get 
$$x_5=x_1^{yx_1}.$$

We eliminate $x_5$ and we have
$$G(K_4)= \langle x_1, x_2, x_3, y ~|~  x_1^{yx_1}x_1=x_2x_1^{yx_1}, x_2x_3^y=x_1^{yx_1}x_2, x_1^yx_2=x_3x_1^y \rangle.$$

Next, we have
$$x_2=x_1^{yx_1}x_1x_1^{-yx_1}.$$

So we eliminate $x_2$ and we get
$$G(K_4)= \langle x_1, x_3, y ~|~  x_1^{yx_1}x_1x_1^{-yx_1}x_3^y=x_1^{yx_1}x_1^{yx_1}x_1x_1^{-yx_1}, x_1^yx_1^{yx_1}x_1x_1^{-yx_1}=x_3x_1^y \rangle.$$

Writing $x_3$ in terms of $x_1$ and $y$ and replacing $x_1$ with $x$, we get
$$G(K_4)= \langle x, y ~|~  x^{yx}xx^{-yx}(x^yx^{yx}xx^{-yx}x^{-y})^y=x^{yx}x^{yx}xx^{-yx} \rangle.$$

Simplifying the relation, $G(K_4)$ has the presentation
$$
G(K_4) = \langle\, x,\,y\, \| \, x^{y^{-1} x^{-1} y x y^{-1} x^{-1} y^2}=x^{y x y^{-1}x^{-1} y x} \,  \rangle.
$$

\begin{figure}[hbtp]
\centering
\includegraphics[scale=0.7]{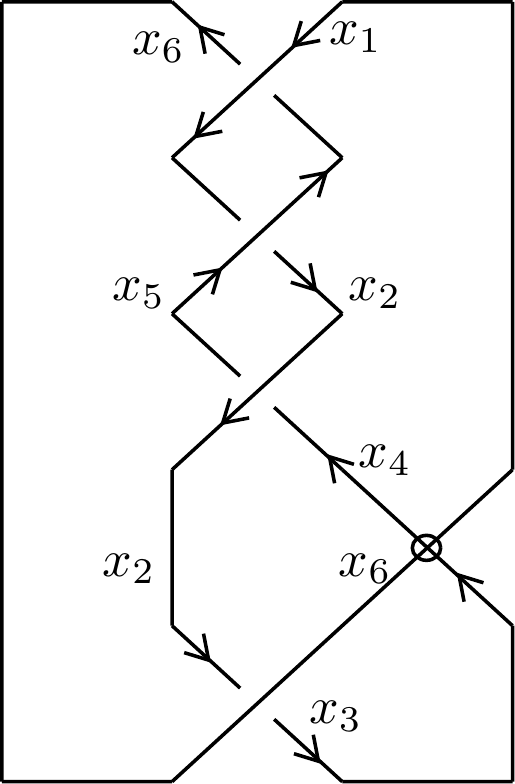}
\caption{Knot $K_4$.}
\label{BraidForK_4}
\end{figure}

\begin{theorem} The following holds for $G(K_4)$:
\begin{enumerate}
\item The group $G(K_4)$ is isomorphic to $G(K_4) = H_4\rtimes \mathbb{Z}$, where $$
H_4=\cdots \underset{B_{k-1}}{\ast} A_{k}\underset{B_{k}}{\ast} A_{k+1}
  \underset{B_{k+1}}{\ast} A_{k+2} \underset{B_{k+2}}{\ast} \cdots
$$
is the amalgamated free product with $$
A_k=\langle\, x_k,\, x_{k+1},\, x_{k+2} \, \| \,
    [x_{k+1}, x_{k+2}^{-1}  x_{k} x_{k+2}^{-1} x_{k}^{-1} x_{k+1} x_{k}^{-1}]=1 \,  \rangle,
$$ and $B_k$ is free group of rank 2, $k \in \mathbb{Z}$.

\item The quotient of $G(K_4)$ by the terms of lower central series gives
\begin{eqnarray*}
G(K_4) / \gamma_4 G(K_4) & \cong &  F_2/\gamma_4 F_2,\\
\gamma_4 G(K_4) / \gamma_5 G(K_4) & \cong &  \mathbb{Z}^2 \times \mathbb{Z}_2,\\
G(K_4) / \gamma_5 G(K_4) & \not\cong &  F_2/\gamma_5 F_2.
\end{eqnarray*}
\end{enumerate}
\end{theorem}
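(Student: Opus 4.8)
The plan is to handle the two parts separately, in both cases starting from the single defining relation and first rewriting it as a commutator relation. Setting $w_1=y^{-1}x^{-1}yxy^{-1}x^{-1}y^2$ and $w_2=yxy^{-1}x^{-1}yx$, the relation $x^{w_1}=x^{w_2}$ is equivalent to $[x,u]=1$ with $u=w_1w_2^{-1}$; a direct count shows that $u$ has $y$-exponent $0$ and $x$-exponent $-2$.

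For part (1), I would use the split surjection $\pi\colon G(K_4)\to\mathbb{Z}$, $x\mapsto 0$, $y\mapsto 1$. This is well defined because both sides of the defining relation are conjugates of $x$ and hence map to $0$, and it is split by $1\mapsto y$; writing $H_4=\ker\pi$ gives $G(K_4)=H_4\rtimes\langle y\rangle$. I would then compute a presentation of $H_4$ by Reidemeister--Schreier with transversal $\{y^k:k\in\mathbb{Z}\}$. The $y$-generators rewrite trivially, the Schreier generators are $x_k=y^{k}xy^{-k}$, and conjugation by $y$ realises the index shift $x_k\mapsto x_{k+1}$, which is exactly the $\mathbb{Z}$-action. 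Rewriting the single relator and its $y^{k}$-conjugates, I expect each to collapse to a word supported only on the three consecutive generators $x_k,x_{k+1},x_{k+2}$, giving precisely the relator of $A_k$ stated in the theorem (up to the relabelling and cyclic-conjugation ambiguity inherent in the rewriting). The resulting presentation $\langle x_k\ (k\in\mathbb{Z})\mid r_k\ (k\in\mathbb{Z})\rangle$, with $r_k$ supported on $\{x_k,x_{k+1},x_{k+2}\}$ and consecutive supports overlapping in $\{x_{k+1},x_{k+2}\}$, is exactly the presentation of the iterated amalgam $\cdots \ast_{B_{k-1}} A_k \ast_{B_k} A_{k+1}\ast_{B_{k+1}}\cdots$ with $B_k=\langle x_{k+1},x_{k+2}\rangle$.

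The step that requires real justification, and the main obstacle in (1), is that this is an \emph{honest} amalgamated product: one must show each edge group $B_k=\langle x_{k+1},x_{k+2}\rangle$ is free of rank $2$ and embeds into both $A_k$ and $A_{k+1}$. Here I would invoke the Magnus Freiheitssatz: each $A_k$ is a one-relator group whose cyclically reduced relator involves all three generators $x_k,x_{k+1},x_{k+2}$, so omitting any one of them leaves a free group of rank $2$. In particular $\langle x_{k+1},x_{k+2}\rangle$ (omitting $x_k$ in $A_k$, and omitting $x_{k+3}$ in $A_{k+1}$) is free of rank $2$ in each factor, and the two embeddings are identified by the identity map of $F_2$. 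Since the underlying graph of groups is a bi-infinite line, and hence a tree, its fundamental group is the stated iterated amalgam; combined with the $y$-action this proves $G(K_4)=H_4\rtimes\mathbb{Z}$.

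For part (2), I would follow verbatim the commutator-collection scheme already used for $K_1$, $K_2$, $K_3$. In $F_2=\langle x,y\rangle$ the relator is $[x,u]$; since $u\equiv x^{-2}\pmod{\gamma_2 F_2}$ it lies in $\gamma_3 F_2$, and a collection modulo $\gamma_4 F_2$ shows it in fact lies in $\gamma_4 F_2$, whence $G(K_4)/\gamma_4 G(K_4)\cong F_2/\gamma_4 F_2$. I would then expand the relator in $\gamma_4 F_2/\gamma_5 F_2\cong\mathbb{Z}^3$, spanned by the basic commutators $[x,y,x,x]$, $[x,y,y,x]$, $[x,y,y,y]$, using the same commutator identities as in the $K_2$ and $K_3$ computations. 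The single defining relation is expected to reduce to one relation of content $2$, of the shape $\big([x,y,y,x]^{-1}[x,y,x,x]\big)^{2}\equiv 1$, so that the cokernel of the corresponding $1\times 3$ integer relation matrix is $\mathbb{Z}^2\times\mathbb{Z}_2$; this gives $\gamma_4 G(K_4)/\gamma_5 G(K_4)\cong\mathbb{Z}^2\times\mathbb{Z}_2$, and since $\gamma_4 F_2/\gamma_5 F_2\cong\mathbb{Z}^3$ is torsion-free we conclude $G(K_4)/\gamma_5 G(K_4)\not\cong F_2/\gamma_5 F_2$. The laborious part here is the weight-$4$ collection itself: isolating the square factor and checking that no further independent relation survives modulo $\gamma_5 G(K_4)$.
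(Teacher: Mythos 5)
Your proposal follows essentially the same route as the paper: both rewrite the defining relation as a commutator relation $[x,u]=1$, introduce the shifted generators $x_k$ conjugate under $y$, split off $\mathbb{Z}=\langle y\rangle$ to obtain $G(K_4)=H_4\rtimes\mathbb{Z}$ with $H_4$ presented by the translates $r_k$ of the relator, identify $H_4$ with the bi-infinite amalgam of the $A_k$ over $B_k=\langle x_{k+1},x_{k+2}\rangle$, and for part (2) perform the same weight-$4$ commutator collection modulo $\gamma_5$, arriving at the same relation $\left([x,y,y,x][x,y,x,x]^{-1}\right)^2\equiv 1$ and hence the same quotients. The only difference is one of rigor rather than method: you make explicit the Reidemeister--Schreier rewriting and the Freiheitssatz argument showing that each $B_k$ is free of rank $2$ and embeds in the adjacent vertex groups $A_k$, $A_{k+1}$, steps the paper asserts without justification.
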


\begin{proof}~~
 \enumerate
\item
We consider the relation
$$
x^{y^{-1} x^{-1} y x y^{-1} x^{-1} y^2}=x^{y x y^{-1}x^{-1} y x}.
$$
Transforming it, we obtain
$$
x^{y^{-1} x^{-1} y x y^{-1} x^{-1} y^2 x^{-1} y^{-1}x y x^{-1} y^{-1}}=x,$$
which implies
$$ [x, y^{-1} x^{-1} y x y^{-1} x^{-1} y^2 x^{-1} y^{-1}x y x^{-1} y^{-1}]=1.
$$
If we introduce the notation
$$
x_k=y^{-k} x y^{k},\quad k \in \mathbb{Z},
$$
then we can rewrite the above relation as
$$
[x_0, x_1^{-1}  x_0 x_1^{-1} x_{-1}^{-1} x_0 x_{-1}^{-1}]=1.
$$
Let
$$
H_4=\langle\, x_k,\,k\in \mathbb{Z}\, \| \,
    [x_{k+1}, x_{k+2}^{-1}  x_{k} x_{k+2}^{-1} x_{k}^{-1} x_{k+1} x_{k}^{-1}]=1,\,k\in \mathbb{Z} \,  \rangle.
$$
Then
$$
G(K_4) = H_4\rtimes \mathbb{Z},
$$
where $\mathbb{Z}= \langle y \rangle$ and $x_k^{y}=x_{k+1}$, $k\in \mathbb{Z}$.
Further, for every   $k\in \mathbb{Z}$ let
$$
A_k=\langle\, x_k,\, x_{k+1},\, x_{k+2} \, \| \,
    [x_{k+1}, x_{k+2}^{-1}  x_{k} x_{k+2}^{-1} x_{k}^{-1} x_{k+1} x_{k}^{-1}]=1 \,  \rangle,
$$
and
$$
B_k=\langle\,  x_{k+1},\, x_{k+2}\, \rangle.
$$
Then
$$
H_4=\cdots \underset{B_{k-1}}{\ast} A_{k}\underset{B_{k}}{\ast} A_{k+1}
  \underset{B_{k+1}}{\ast} A_{k+2} \underset{B_{k+2}}{\ast} \cdots
$$
is an infinite  free product with amalgamation.
Note that  $B_k\cong F_2$ is a free group of rank  2,  $k\in \mathbb{Z}$.
\bigskip

\item
Transforming the relation, we get
\begin{eqnarray*}
x^{y^{-1} x^{-1} y x y^{-1} x^{-1} y^2} &=& x^{y x y^{-1}x^{-1} y x},\\
x^{y^{-1} x^{-1} y x y^{-1} x^{-1} y x} &=&  x^{y x y^{-1}x^{-1} y x y^{-1}x},\\
x^{[y, x]^2 } &=& x^{x^{-2}y x [y,x] y^{-1}x},\\
\lbrack[y, x]^2,x \rbrack &=&  [x^{-2}y x [y,x] y^{-1}x, x],\\
\lbrack y, x,x \rbrack ^{[y, x]} [y, x,x] &=&  [x^{-2}y x y^{-1}x [y,x] [y,x,y^{-1}x], x],\\
\lbrack y,x,x \rbrack [y, x,x,[y, x]] [y,x,x] &=&  [[x,y^{-1}]^{x} [y,x] [y,x,y^{-1}x], x].
\end{eqnarray*}

Performing the transformations modulo  $\gamma_5 G(K_4)$, we obtain
\begin{eqnarray*}
\lbrack y,x,x \rbrack^2 &\equiv & [[x,y^{-1}] [x,y^{-1},x] [y,x] [y,x,y^{-1}x], x],\\
\lbrack y,x,x \rbrack^2 &\equiv & [[x,y^{-1}] [y,x], x] [x,y,x,x]^{-1}  [y,x,y,x]^{-1} [y,x,x,x],\\
\lbrack y,x,x \rbrack^2 &\equiv & [[y,x]^{y^{-1}} [y,x], x] [y,x,x,x]^{2}  [y,x,y,x]^{-1},\\
\lbrack y,x,x \rbrack^2 &\equiv & [[y,x] [y,x,y^{-1}] [y,x], x] [y,x,x,x]^{2}  [y,x,y,x]^{-1},\\
\lbrack y,x,x \rbrack^2 &\equiv & [[y,x]^2 , x] [y,x,x,x]^{2}  [y,x,y,x]^{-2},\\
1&\equiv &  [y,x,x,x]^{2}  [y,x,y,x]^{-2}.
\end{eqnarray*}

Therefore, in the quotient $G(K_4)/\gamma_5 G(K_4)$, the relation is of the form
$$
\left( [x,y,y,x]  [x,y,x,x]^{-1}\right)^2 \equiv 1.
$$
Since the commutators $[x,y,y,x]$, $[x,y,x,x]$, $[x,y,y,y]$
are the basic commutators of weight  4, we have
\begin{eqnarray*}
G(K_4) / \gamma_4 G(K_4) & \cong &  F_2/\gamma_4 F_2,\\
\gamma_4 G(K_4) / \gamma_5 G(K_4) & \cong &  \mathbb{Z}^2 \times \mathbb{Z}_2,\\
G(K_4) / \gamma_5 G(K_4) & \not\cong &  F_2/\gamma_5 F_2.
\end{eqnarray*}
\end{proof}

\section{Groups $G_A$ and $G_{\tilde{M}}$ for  the virtual Hopf link} \label{section4}
\begin{proposition}
The group $G_A(L)$ and $G_{\tilde{M}}(L)$ of virtual Hopf link $L$ are right angled Artin groups and $G_A(L)$ is not isomorphic to $G_{\tilde{M}}(L)$. 
\end{proposition}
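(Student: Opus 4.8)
The plan is to compute both groups $G_A(L)$ and $G_{\tilde M}(L)$ explicitly from diagrammatic or braid presentations of the virtual Hopf link, simplify each to a finite presentation, recognize each as a right-angled Artin group (RAAG) by identifying generators whose only relations are commutators between specified pairs, and finally distinguish the two groups by computing an invariant that is easy for RAAGs. The virtual Hopf link is a two-component link with one real and one virtual crossing, so both presentations should be small — likely two or three generators each after eliminating redundant arc/strand symbols.

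\medskip

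\emph{First} I would write down $G_A(L)$ using the representation $\varphi_A$ applied to a braid word $\beta \in VB_2$ whose closure is $L$ (the virtual Hopf link is the closure of $\sigma_1 \rho_1$ or a similar short word), forming $G_A(\beta) = \langle x_1, x_2, y \mid \mathcal R,\ h_i = \varphi_A(\beta)(h_i)\rangle$ exactly as in the recipe from Section~\ref{Section2}. I would carry out the same computation for $G_{\tilde M}(L)$ using $\tilde\varphi_M$ on $F_{2,2} = F_2 \ast \mathbb Z^2$, which carries extra generators $v_1, v_2$. After substituting the closure relations $h_i = \tilde\varphi_M(\beta)(h_i)$ and eliminating generators via Tietze transformations, I expect each presentation to collapse to a handful of generators with only commutation relations surviving; reading off the defining graph $\Gamma$ then exhibits each group as the RAAG $A(\Gamma)$. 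For the virtual Hopf link I anticipate one group is free abelian of small rank (all generators commuting, e.g. $\mathbb Z^3$) while the other has a strictly smaller set of commuting pairs (e.g. $\mathbb Z^2 \ast \mathbb Z$ or $\mathbb Z \times F_2$), which already signals non-isomorphism.

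\medskip

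\emph{To prove non-isomorphism} I would use a RAAG invariant that is insensitive to presentation. The cleanest choices are: the abelianization $G/\gamma_2 G \cong \mathbb Z^{|V(\Gamma)|}$ (which distinguishes the groups if the two defining graphs have different numbers of vertices), or, if the abelianizations happen to agree, the second term of the lower central series quotient $\gamma_2/\gamma_3$, which for a RAAG $A(\Gamma)$ is free abelian of rank equal to the number of \emph{non-edges} of $\Gamma$ (equivalently, counted by the commutators $[g_i, g_j]$ for non-adjacent generators). Since $\gamma_2/\gamma_3$ is computed intrinsically for any finitely generated group, comparing its ranks gives a basis-free obstruction. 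Concretely I would compute $\dim_{\mathbb Q}\big(\gamma_2/\gamma_3 \otimes \mathbb Q\big)$ for both groups and show the two numbers differ.

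\medskip

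\emph{The main obstacle} I expect is purely computational bookkeeping rather than conceptual: correctly applying $\varphi_A$ and $\tilde\varphi_M$ to the chosen braid word and faithfully performing the Tietze simplifications without dropping a relation, since a single sign or conjugation error would corrupt the defining graph. A secondary subtlety is verifying that the simplified presentation genuinely \emph{is} a RAAG — that the surviving relations are exactly pairwise commutators and nothing more (e.g. no hidden relation forcing a further quotient). I would confirm this by checking that the presentation matches the standard RAAG form and, as a sanity check, that its abelianization and $\gamma_2/\gamma_3$ ranks agree with the combinatorial formula for the candidate graph $\Gamma$. Once both graphs are pinned down, the distinction by vertex count or non-edge count is immediate and makes the non-isomorphism rigorous.
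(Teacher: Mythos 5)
Your proposal follows essentially the same route as the paper: compute $G_A(L)$ and $G_{\tilde M}(L)$ from a short braid representative (the paper uses $\sigma_1^{-1}\rho_1$), simplify to presentations with only commutator relations (the paper gets $\mathbb{Z}\times F_2$ on three generators and a RAAG on four generators $y_1,y_2,v_1,v_2$), and distinguish them by abelianization rank ($\mathbb{Z}^3$ versus $\mathbb{Z}^4$), exactly your primary invariant. The $\gamma_2/\gamma_3$ fallback is unnecessary since the vertex counts already differ, and your guess that one group might be $\mathbb{Z}^3$ is slightly off, but the method is the same and sound.
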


\begin{proof}
 It is easy to see that $L$ is equivalent to closure of braid $\beta=\sigma_1^{-1}\rho_1 \in VB_2$ as depicted in Figure \ref{BraidForHopfLink}.
We have
$$G_A(\beta)=\langle\, x_1,\,x_2,\,y\, \| \, [y, x_1]\,=\,[x_1, x_2] =1 \,  \rangle $$
which is isomorphic to the direct product $\mathbb{Z} \times F_2$, where $\mathbb{Z} = \langle x_1 \rangle$ and $F_2 = \langle x_2, y \rangle$.
On the other hand, we have
$$G_{\tilde{M}}(K)=\langle\, y_1, \, y_2,\, v_1 ,\,v_2 \|\, [y_1,v_2]=[y_1,y_2]=[v_1,v_2]=1 \, \rangle$$
which is isomorphic to the quotient of the free product $\mathbb{Z}^2 * \mathbb{Z}^2$ by the normal closure of the commutator $[y_1, v_2]$, where the first factor  $\mathbb{Z}^2 = \langle y_1, y_2 \rangle$ and the second factor $\mathbb{Z}^2 = \langle v_1, v_2 \rangle$.\\
\\
Hence, we have
$$G_A(\beta)/\gamma_2 G_A(\beta) \cong \mathbb{Z}^3,$$
and
$$G_{\tilde{M}}(\beta)/\gamma_2 G_{\tilde{M}}(\beta) \cong \mathbb{Z}^4.$$
which implies
$$
G_A(L) \not \cong G_{\tilde{M}}(L).
$$
\end{proof}

\begin{figure}[hbtp]
\centering
\includegraphics[scale=0.35]{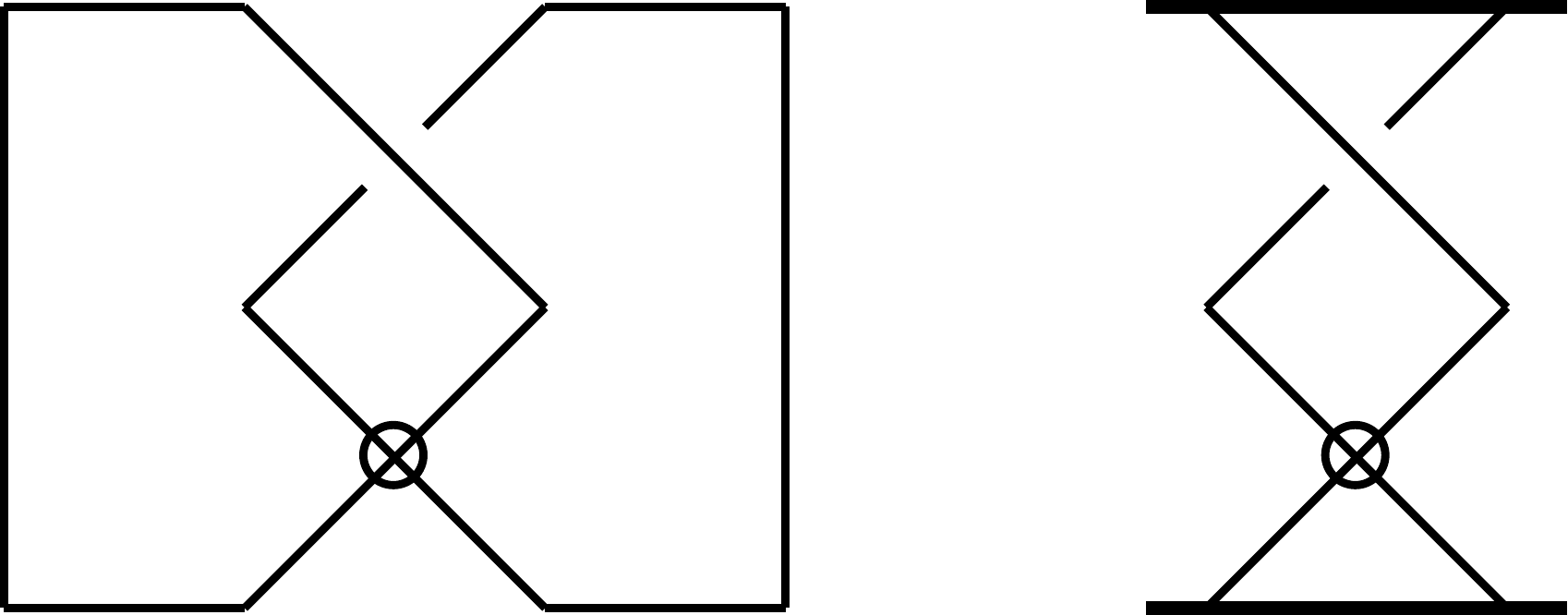}
\caption{Hopf Link and its corresponding braid}
\label{BraidForHopfLink}
\end{figure}

In \cite{Mikhailov}, a one-relator group is constructed whose lower central series has length equal to $\omega^2$.
\begin{problem}
Is there a virtual knot $K$ such that the length of the lower central series of 
$G_A(K)$ is more than $\omega$? 
\end{problem}

\begin{problem}
We conclude with the following problems:
\begin{enumerate}
\item Suppose that $K$ is a non-classical virtual knot. Is it true that
$$
\gamma_2 G_A(K) \not= \gamma_3 G_A(K)?
$$
In other words, is it true that the lower central series distinguish classical knots? 

\item Which groups of virtual knots are residually nilpotent?

\end{enumerate}
\end{problem}


\noindent\textbf{Acknowledgments.}
This work was supported by  the Russian Science Foundation (project No. 19-41-02005). Neha Nanda would like to thank her advisor Mahender Singh for his
comments and corrections in an earlier version of the paper. She also
thanks IISER Mohali for the PhD Research Fellowship and the DST grant
INT/RUS/RSF/P-02. The authors would also like to thank the referee for valuable suggestions and corrections.

\end{document}